\documentclass{article}

\usepackage{amsmath}
\usepackage{amssymb}
\usepackage{amsthm}
\usepackage{graphicx}
\usepackage{xcolor}
\usepackage{bm}
\usepackage{booktabs}
\usepackage{caption}
\usepackage{enumitem}

\usepackage[ruled,linesnumbered]{algorithm2e}
\usepackage[a4paper,left=2.8cm,right=2.8cm,top=2.5cm,bottom=2.5cm]{geometry}
\usepackage{fancyhdr}

\usepackage{hyperref}
\definecolor{Mycolor}{RGB}{10,74,38}
\hypersetup{
  colorlinks=true,
  linkcolor=Mycolor,
  citecolor=Mycolor,
  urlcolor=Mycolor
}
\newtheorem{theorem}{Theorem}[section]
\newtheorem{corollary}[theorem]{Corollary}
\newtheorem{proposition}[theorem]{Proposition}
\newtheorem{lemma}[theorem]{Lemma}

\theoremstyle{definition}
\newtheorem{definition}[theorem]{Definition}

\theoremstyle{remark}

\numberwithin{equation}{section}

\newcommand{\Sd}{\mathbb S^d}

\newcommand{\dd}{\,\mathrm d}

\newcommand{\ran}{\operatorname{ran}}
\newcommand{\Proj}{\mathcal P}
\newcommand{\id}{\mathcal I}
\newcommand{\E}{\mathbb E}

\newcommand{\norm}[1]{\left\lVert #1\right\rVert}
\newcommand{\abs}[1]{\left\lvert #1\right\rvert}

\title{Positivity loss in bandlimited spectral reproduction on spheres}
\author{Hao-Ning Wu\thanks{School of Mathematical Sciences, Xiamen
University, Xiamen, Fujian, 361005, China (\texttt{hnwu@xmu.edu.cn})}}
\date{}

\begin{document}

\maketitle

\begin{abstract}
How small can the positivity loss be for an \(N\)-bandlimited spectral operator that exactly reproduces all modes up to degree \(L\)?  For spherical polynomial approximation on \(\mathbb S^d\), we prove that the smallest possible excess of the uniform operator norm above \(1\), equivalently the least positivity loss, is of sharp order $\left({L}/{(N+1)}\right)^2$ when \(1\le L< N\).  The lower bound follows from a Fej\'er peak test and a concentration estimate for bandlimited kernels, while a matching upper bound is obtained by correcting a positive Jackson operator with a smooth filter. We illustrate the result in three settings.  On the circle, taking \(N=sL-1\), this determines the sharp order of the generalized-projection constant above \(1\) and identifies the gap between the \(s^{-1}\) excess of delayed de la Vall\'ee--Poussin means and the optimal \(s^{-2}\) order. For filtered hyperinterpolation, whose operator norm has long been known to be uniformly bounded, we give a quantitative lower bound on its separation from the positivity threshold \(1\). Finally, we identify an operator-level obstruction to maximum principles.
\end{abstract}

\medskip
\noindent\textbf{Keywords.}
Spherical polynomials, generalized projections, positivity loss,
spectral filters, projection constants.

\smallskip
\noindent\textbf{2020 Mathematics Subject Classification.}
41A35, 41A44, 42C10, 41A36.

\section{Introduction}

We consider spectral approximation on spheres \(\Sd\), where spherical harmonics provide the canonical frequency decomposition. A natural approximation operator should exactly reproduce the resolved low-frequency modes while keeping its output bandlimited. Exact reproduction preserves the information already resolved, whereas finite bandwidth keeps the approximation finite-dimensional. These two requirements, however, interact nontrivially with positivity. Orthogonal projections and other sharp spectral cutoffs typically have sign-changing kernels and may therefore map a nonnegative function to one that takes negative values. Allowing the output band to extend beyond the exactly reproduced modes creates a transition band in which oscillations can be reduced and stability improved. In this paper, we ask how much this additional spectral freedom can reduce the unavoidable loss of positivity.

Our aim is to determine the smallest possible positivity loss in terms of the two spectral
scales.  Let \(C(\Sd)\) be the real Banach space of continuous
functions on \(\Sd\), equipped with the uniform norm, and let
\(\mathbb P_n\) denote the space of real
spherical polynomials of degree at most \(n\).  Given \(L\le N\), we consider
operators $\mathcal{A}:\,C(\Sd)\rightarrow C(\Sd)$ satisfying the exact reproduction condition
\begin{equation}\label{eq:reproduction-constraint}
 \mathcal A p=p\quad \forall p\in\mathbb P_L,
\end{equation}
and the bandwidth condition
\begin{equation}\label{eq:bandwidth-constraint}
 \ran\mathcal A\subseteq\mathbb P_N.
\end{equation}
Thus \(L\) is the exact reproduction bandwidth, \(N\) is the output
bandwidth, and the degrees \(L+1,\ldots,N\) form the transition band between
the reproduced and discarded frequencies. Both constraints are essential.
Without \eqref{eq:bandwidth-constraint}, the identity operator $\id$ is admissible and
positive.  Without \eqref{eq:reproduction-constraint}, the projection onto
the constants,
\(
 \Proj_0f=\int_{\Sd}f\,\dd\sigma,
\)
is a finite-rank positive operator.  When the two constraints are imposed
together, positivity is impossible for \(L\ge1\). Indeed, a positive operator
reproducing \(\mathbb P_1\) must be the identity, which cannot satisfy
\eqref{eq:bandwidth-constraint} for any finite \(N\).

To measure the loss of positivity, we define the two-scale generalized-projection constant
\begin{equation}\label{eq:intro-lambda}
 \lambda_d(L,N)
 :=
 \inf\left\{
 \norm{\mathcal A}_{C(\Sd)\to C(\Sd)}:
 \mathcal A\in\mathcal L(C(\Sd)),
 \ \mathcal A|_{\mathbb P_L}=\id,
 \ \ran\mathcal A\subseteq\mathbb P_N
 \right\},
\end{equation}
where \(\mathcal L(C(\Sd))\) denotes the space of bounded linear operators on
\(C(\Sd)\).  The condition 
\[\mathcal A|_{\mathbb P_L}=\id\] means precisely that
\(\mathcal A\) satisfies the reproduction property
\eqref{eq:reproduction-constraint}.  When \(L=N\), the problem reduces to
the classical projection-constant problem for \(\mathbb P_N\); see
\cite{Grunbaum1960} for the general theory of projection constants and
\cite{DefantGalicerMansillaMastyloMuro2026} for spherical cases.  When \(L<N\), the additional degrees \(L+1,\ldots,N\) provide a
transition band that can be exploited by smooth spectral filters to obtain
degree-uniform stability bounds; see, e.g.,
\cite{Sloan2011,SloanWomersley2012}.
The extremal problem \eqref{eq:intro-lambda} asks how closely the optimal
operator norm can approach the positivity threshold \(1\), attained by
positive constant-preserving operators.

Since the constant function \(1\) belongs to \(\mathbb P_L\), every
operator $\mathcal A$ admissible in \eqref{eq:intro-lambda} is unital, that is,
\[\mathcal A1=1.\]
For a general bounded operator \(\mathcal A:C(\Sd)\to C(\Sd)\) with
\(\mathcal A1=1\), the Riesz--Markov theorem gives, for each \(x\in\Sd\), a
unique finite signed regular Borel measure \(\mu_x^{\mathcal A}\) representing
the bounded functional \(L_x(f):=\mathcal Af(x)\).  Thus
\begin{equation}\label{eq:Riesz-kernel}
 \mathcal Af(x)=\int_{\Sd}f(y)\,\dd\mu_x^{\mathcal A}(y),
 \qquad
 \norm{L_x}_{C(\Sd)^*}
 =\norm{\mu_x^{\mathcal A}}_{\mathrm{TV}}.
\end{equation}
Since \(\mathcal A1=1\), the measure has total mass one.  Let
\[\mu_x^{\mathcal A}
=\mu_{x,+}^{\mathcal A}-\mu_{x,-}^{\mathcal A}\]
be its Jordan decomposition.

\begin{definition}[Positivity loss]\label{def:positivity-loss}
The \emph{positivity loss} of a bounded operator
\(\mathcal A:C(\Sd)\to C(\Sd)\) satisfying \(\mathcal A1=1\) is defined by
\[
 \mathfrak p(\mathcal A)
 :=\sup_{x\in\Sd}\mu_{x,-}^{\mathcal A}(\Sd).
\]
\end{definition}

The following identity relates positivity loss to the operator norm.  In
particular, it shows that \(1\) is the smallest possible norm and that this
value is attained exactly by positive operators.
\begin{proposition}[Norm and positivity loss]
\label{prop:general-defect}
For every bounded operator \(\mathcal A:C(\Sd)\to C(\Sd)\) with
\(\mathcal A1=1\),
\begin{equation}\label{eq:general-norm-defect}
 \norm{\mathcal A}_{C(\Sd)\to C(\Sd)}
 =1+2\mathfrak p(\mathcal A).
\end{equation}
Moreover,
\begin{equation}\label{eq:general-undershoot}
 \mathfrak p(\mathcal A)
=\sup_{\substack{x\in\Sd,\,0\le f\le1}}
       \bigl(\mathcal Af(x)-1\bigr)=\sup_{\substack{x\in\Sd,\,0\le f\le1}}
       \bigl(-\mathcal Af(x)\bigr).
\end{equation}
In particular, \(\norm{\mathcal A}\ge1\), with equality if and only if
\(\mathcal A\) is positive.
\end{proposition}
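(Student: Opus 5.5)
The plan is to work pointwise in \(x\) with the representing measures \(\mu_x:=\mu_x^{\mathcal A}\) from \eqref{eq:Riesz-kernel} and then take suprema. First I record the standard identity
\[
\norm{\mathcal A}_{C(\Sd)\to C(\Sd)}=\sup_{x\in\Sd}\norm{L_x}_{C(\Sd)^*}=\sup_{x\in\Sd}\norm{\mu_x}_{\mathrm{TV}} .
\]
It holds because \(\norm{\mathcal Af}_\infty=\sup_x\abs{L_xf}\), and the two suprema over \(x\) and over \(f\) may be interchanged. Next, \(\mathcal A1=1\) gives \(\mu_{x,+}(\Sd)-\mu_{x,-}(\Sd)=1\), while the Jordan decomposition gives \(\norm{\mu_x}_{\mathrm{TV}}=\mu_{x,+}(\Sd)+\mu_{x,-}(\Sd)\). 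Together these yield
\[
\norm{\mu_x}_{\mathrm{TV}}=1+2\mu_{x,-}(\Sd),
\]
and taking the supremum over \(x\) gives \eqref{eq:general-norm-defect}.

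For \eqref{eq:general-undershoot}, fix \(x\) and take \(0\le f\le1\) continuous. Then
\[
\mathcal Af(x)=\int f\,\dd\mu_{x,+}-\int f\,\dd\mu_{x,-}\le\mu_{x,+}(\Sd)=1+\mu_{x,-}(\Sd),
\]
so \(\mathcal Af(x)-1\le\mu_{x,-}(\Sd)\). Similarly \(-\mathcal Af(x)\le\int f\,\dd\mu_{x,-}\le\mu_{x,-}(\Sd)\). Both suprema are therefore at most \(\mathfrak p(\mathcal A)\). The two suprema are also equal to each other: replacing \(f\) by \(1-f\) and using \(\mathcal A1=1\) turns \(\mathcal Af(x)-1\) into \(-\mathcal A(1-f)(x)\). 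So it suffices to prove the reverse inequality for \(\sup(-\mathcal Af(x))\).

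For the reverse inequality, let \(P\) and \(N\) be a Hahn decomposition for \(\mu_x\), with \(\mu_{x,-}\) concentrated on \(N\). Given \(\varepsilon>0\), regularity of the finite Borel measure \(\abs{\mu_x}\) on the compact metric space \(\Sd\) supplies a compact \(K\subseteq N\) and an open \(U\supseteq K\) with \(\abs{\mu_x}(N\setminus K)<\varepsilon\) and \(\abs{\mu_x}(U\setminus K)<\varepsilon\). Urysohn's lemma then gives a continuous \(f\) with \(0\le f\le1\), \(f=1\) on \(K\) and \(f=0\) off \(U\). For this \(f\),
\[
-\mathcal Af(x)\ge\mu_{x,-}(K)-\mu_{x,+}(U)\ge\mu_{x,-}(\Sd)-2\varepsilon,
\]
using \(\mu_{x,+}(K)=0\). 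Letting \(\varepsilon\to0\) and then taking the supremum over \(x\) completes \eqref{eq:general-undershoot}. This regularity and Urysohn approximation is the only step with any real content; everything else is bookkeeping.

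For the final assertion, \(\norm{\mathcal A}\ge1\) is immediate from \eqref{eq:general-norm-defect}, with equality exactly when \(\mu_{x,-}=0\) for every \(x\), that is, when every \(\mu_x\ge0\). If all \(\mu_x\ge0\), then \(\mathcal A\) is positive. Conversely, if \(\mathcal A\) is positive, then \(-\mathcal Af(x)\le0\) for all \(0\le f\le1\), so \(\mathfrak p(\mathcal A)=0\) by \eqref{eq:general-undershoot}.
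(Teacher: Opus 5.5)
Your proof is correct and follows essentially the same route as the paper: the identity $\norm{\mathcal A}=\sup_x\norm{\mu_x}_{\mathrm{TV}}=1+2\sup_x\mu_{x,-}(\Sd)$, the trivial upper bound by $\mu_{x,-}(\Sd)$, and a Hahn set with regularity and Urysohn's lemma for the reverse inequality, with $f\mapsto 1-f$ linking the two suprema. The only cosmetic difference is that you take the regularity approximations with respect to $|\mu_x|$ rather than separately for $\mu_{x,-}$ and $\mu_{x,+}$. You may also want to rename your Hahn set and compact set so they do not clash with the paper's $N$ and $K$.
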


\begin{proof}
By \eqref{eq:Riesz-kernel},
$\norm{\mathcal A}
 =\sup_{x\in\Sd}
  \norm{\mu_x^{\mathcal A}}_{\mathrm{TV}}$.
Since 
\[\mu_{x,+}^{\mathcal A}(\Sd)
 -\mu_{x,-}^{\mathcal A}(\Sd)
 =\mu_x^{\mathcal A}(\Sd)=1,\]
we have
\begin{equation*}
 \norm{\mu_x^{\mathcal A}}_{\mathrm{TV}}
 =
 \mu_{x,+}^{\mathcal A}(\Sd)
 +\mu_{x,-}^{\mathcal A}(\Sd)=
 1+2\mu_{x,-}^{\mathcal A}(\Sd).
\end{equation*}
Taking the supremum over \(x\) proves
\eqref{eq:general-norm-defect}.
For fixed \(x\in\Sd\) and \(0\le f\le1\),
\[
 -\int_{\Sd}f\,\dd\mu_x^{\mathcal A}
 =
 -\int_{\Sd}f\,\dd\mu_{x,+}^{\mathcal A}
 +\int_{\Sd}f\,\dd\mu_{x,-}^{\mathcal A}
 \le \mu_{x,-}^{\mathcal A}(\Sd).
\]
For the reverse inequality, mutual singularity gives a Borel set
\(E\subseteq\Sd\) such that
$\mu_{x,+}^{\mathcal A}(E)=0$ and
$\mu_{x,-}^{\mathcal A}(\Sd\setminus E)=0$.
By regularity, for every \(\varepsilon>0\) there are a compact set
\(F\subseteq E\) and an open set \(U\supseteq F\) such that
$\mu_{x,-}^{\mathcal A}(F)
 \ge \mu_{x,-}^{\mathcal A}(\Sd)-\varepsilon$ and
$\mu_{x,+}^{\mathcal A}(U)<\varepsilon$.
By Urysohn's lemma, there exists \(f\in C(\Sd)\) with
\(0\le f\le1\), \(f=1\) on \(F\), and \(f=0\) on
\(\Sd\setminus U\).  Therefore
\[
 -\int_{\Sd}f\,\dd\mu_x^{\mathcal A}
 \ge
 \mu_{x,-}^{\mathcal A}(F)
 -\mu_{x,+}^{\mathcal A}(U)
 \ge
 \mu_{x,-}^{\mathcal A}(\Sd)-2\varepsilon.
\]
Letting \(\varepsilon\rightarrow0\) shows that
\[
 \sup_{0\le f\le1}\bigl(-\mathcal Af(x)\bigr)
 =\mu_{x,-}^{\mathcal A}(\Sd).\]
Taking the supremum over \(x\) proves the second equality in
\eqref{eq:general-undershoot}.  Since $\mathcal Af(x)-1=-\mathcal A(1-f)(x)$,
and \(f\mapsto1-f\) preserves the class \(0\le f\le1\), we obtain the first equality in
\eqref{eq:general-undershoot}.  Finally,
\(\mathfrak p(\mathcal A)=0\) precisely when all the representing measures
are positive, i.e., when \(\mathcal A\) is positive.
\end{proof}

Applied to \eqref{eq:intro-lambda}, Proposition~\ref{prop:general-defect}
gives \(\lambda_d(L,N)\ge1\). Moreover, no admissible operator is positive when
\(L\ge1\) and \(N<\infty\).  Indeed, if \(\mathcal A\) were positive, then
each \(\mu_x^{\mathcal A}\) would be a probability measure.
For \(p_x(y):=x\cdot y\in\mathbb P_1\subseteq\mathbb P_L\), exact
reproduction gives
\[
 1=p_x(x)=\mathcal Ap_x(x)
   =\int_{\Sd}x\cdot y\,\dd\mu_x^{\mathcal A}(y).
\]
Since \(1-x\cdot y\ge0\), the measure \(\mu_x^{\mathcal A}\) must be
supported on \(\{x\}\), and hence equals the Dirac measure \(\delta_x\).
Thus \(\mathcal A=\id\), contradicting
\eqref{eq:bandwidth-constraint}.  Theorem~\ref{thm:intro} below strengthens
this rigidity by separating \(\lambda_d(L,N)\) quantitatively from \(1\).
The qualitative argument is standard in positive operator and Korovkin
theory; see, e.g., \cite{AltomareCampiti1994}. Our question is quantitative. In particular,
Proposition~\ref{prop:general-defect} also gives the quantitative identity
\[
 \lambda_d(L,N)-1
 =2\inf\left\{\mathfrak p(\mathcal A):\,{\mathcal A\in\mathcal L(C(\Sd)),\,
                   \mathcal A|_{\mathbb P_L}=\id,\,
                   \ran\mathcal A\subseteq\mathbb P_N}\right\}.
\]
Thus the excess norm in \eqref{eq:intro-lambda} is exactly twice the
smallest positivity loss compatible with the two spectral constraints.

We now quantify this unavoidable loss.  Our main result shows that the
smallest  positivity loss has order \((L/(N+1))^2\).  Thus a wider
transition band allows an exactly reproducing operator to approach
positivity, but only at a quadratic rate.
\begin{theorem}\label{thm:intro}
For every \(d\ge1\) and \(0<\kappa<1\), there are constants
\(0<c_d\le C_{d,\kappa}<\infty\) such that
\begin{equation}\label{eq:intro-main}
 c_d\left(\frac{L}{N+1}\right)^2
 \le \lambda_d(L,N)-1
 \le C_{d,\kappa}\left(\frac{L}{N+1}\right)^2,
\end{equation}
whenever \(1\le L\le\kappa N\). In terms of positivity loss, this reads
\begin{equation}\label{eq:intro-main-positivity-loss}
 \frac{c_d}{2}\left(\frac{L}{N+1}\right)^2
 \le
 \inf\left\{\mathfrak p(\mathcal A):\,{\mathcal A\in\mathcal L(C(\Sd)),\,
                   \mathcal A|_{\mathbb P_L}=\id,\,
                   \ran\mathcal A\subseteq\mathbb P_N}\right\}
 \le
 \frac{C_{d,\kappa}}{2}\left(\frac{L}{N+1}\right)^2.
\end{equation}
\end{theorem}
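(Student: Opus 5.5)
We prove the two inequalities in \eqref{eq:intro-main} separately. The statement \eqref{eq:intro-main-positivity-loss} then follows immediately from Proposition~\ref{prop:general-defect}, which gives \(\norm{\mathcal A}=1+2\mathfrak p(\mathcal A)\) for every admissible (hence unital) \(\mathcal A\).

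\emph{Reduction to zonal kernels.} We first average an admissible \(\mathcal A\) over the rotation group \(SO(d+1)\), i.e.\ replace it by \(\int R^{-1}\mathcal A R\,\dd R\). Both constraints are rotation invariant, and the norm does not increase. The infimum can therefore be taken over zonal operators \(\mathcal Af(x)=\int f(y)K(x\cdot y)\dd\sigma(y)\). In the Gegenbauer expansion \(K=\sum_{n\le N}a_n Z_n\), the constraints say \(a_n=1\) for \(n\le L\). There is a subtlety: the averaged operator is a priori given by zonal measures, not kernels. Because its range lies in \(\mathbb P_N\), however, the representing measure is a polynomial density of degree at most \(N\). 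So we need to estimate
\[
 \inf\Bigl\{\norm{K}_{L^1(\sigma)}:\ K\in\mathbb P_N\ \text{zonal},\ a_0=\dots=a_L=1\Bigr\}-1 .
\]

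\emph{Lower bound (the Fej\'er peak test).} Let \(K\) be admissible. Because \(a_1=1\), reproduction of \(p_x(y)=x\cdot y\) gives \(\int (1-x\cdot y)K(x\cdot y)\dd\sigma(y)=0\). Writing \(K=K_+-K_-\) and using \(\int K=1\), we get
\[
 \int (1-x\cdot y)K_+\,\dd\sigma=\int (1-x\cdot y)K_-\,\dd\sigma\le 2\,\mathfrak p .
\]
On its own this only says that the positive part is concentrated near \(x\) when \(\mathfrak p\) is small. Bandlimitation provides the opposing force. The key concentration estimate is that a degree-\(N\) zonal polynomial with mass \(1\) cannot concentrate in a cap of radius \(\ll 1/N\). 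Quantitatively, we want
\[
 \int (1-x\cdot y)\,\abs{K}\,\dd\sigma\ \gtrsim\ N^{-2}\int\abs{K}\,\dd\sigma ,
\]
which follows from a Markov/Bernstein-type bound on the sup of \(K\) over the cap. This only gives \(\mathfrak p\gtrsim N^{-2}\), which matches the theorem when \(L=1\). To capture the factor \(L^2\), we instead test against a Fej\'er-type nonnegative peak polynomial \(F_L\in\mathbb P_L\) centered at \(x\), of width \(\sim 1/L\). Exact reproduction gives \(\int F_L K\,\dd\sigma=F_L(x)\), the peak value of \(F_L\). Since \(0\le F_L\le F_L(x)\), Proposition~\ref{prop:general-defect} applied to \(f=F_L/F_L(x)\) bounds the deficit. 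The mechanism is the following. \(K\) spreads its mass at scale \(1/N\), so \(\int F_LK_+\) falls short of \(F_L(x)\int K_+\) by a factor \(1-c(L/N)^2\), because \(F_L\) has curvature \(\sim L^2\) at its peak. This shortfall must be compensated by the negative part, which forces \(\mathfrak p\gtrsim (L/N)^2\). The step needing most care, and which I expect to be the main obstacle, is making the concentration estimate uniform in the dimension-dependent Gegenbauer weights. I would handle it by reducing to the one-dimensional Jacobi weight in \(\theta\) and proving a weighted Bernstein-type inequality \(\int\theta^2\abs{K}\ge cN^{-2}\int\abs{K}\).

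\emph{Upper bound (a corrected Jackson kernel).} Take a nonnegative Jackson kernel \(J_M\) of degree \(M=N-L\) (for \(L\le\kappa N\), \(M\gtrsim N\)) with coefficients satisfying \(1-b_n\lesssim (n/M)^2\). Let \(\eta\) be a smooth filter equal to \(1\) on \([0,1]\) and supported in \([0,2]\), and define \(\Phi_L=\sum\eta(n/L)Z_n\). This is a filtered kernel with \(\norm{\Phi_L}_{L^1}\le C\). We then set
\[
 K=J_M+(\Phi_L-J_M*\Phi_L),
\]
whose coefficients are \(b_n+\eta(n/L)(1-b_n)\). These equal \(1\) for \(n\le L\), and the degree of \(K\) is at most \(N\) after adjusting the constants (choosing \(2L\le\) the available room, or rescaling \(\eta\) to live on \([L,L+(N-L)/2]\)). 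The correction kernel has coefficients \(\eta(n/L)(1-b_n)\), which are of size \((L/M)^2\) times a smooth symbol in \(n/L\). Standard smooth-filter \(L^1\) bounds (as in \cite{SloanWomersley2012}) then give an \(L^1\) norm \(\lesssim (L/N)^2\). Since \(J_M\ge0\) has unit mass, we get \(\norm{K}_{L^1}\le1+C(L/N)^2\). The symbol smoothness must be checked from the explicit Jackson coefficients, and the constant depends on \(\kappa\) through \(M\ge(1-\kappa)N\).
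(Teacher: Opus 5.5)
\textbf{Upper bound: the decisive estimate is not established.} Your kernel has multipliers $b_n+\eta(n/L)(1-b_n)$, so it is exactly the paper's operator $\mathcal J+(\id-\mathcal J)\mathcal V_L$. Write $\mathcal J_M$ and $\mathcal V_L$ for convolution with $J_M$ and $\Phi_L$. The decisive estimate is $\norm{(\id-\mathcal J_M)\mathcal V_L}\lesssim(L/N)^2$, and you rest it on the claim that $(M/L)^2(1-b_n)\eta(n/L)$ is a smooth symbol in $n/L$. That would need derivative (or finite-difference) bounds uniform in $L$ and $M$, to the order the Sloan--Womersley theorem requires.

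You do not prove this, and it is not a routine check. For $d\ge2$ the Gegenbauer coefficients of $J_M$ have no convenient closed form. Uniform control of $\partial_\nu^j\int J_M P_\nu^{(d)}\dd\nu_d$ would require asymptotics of Gegenbauer functions in the degree. It would also require a Jackson power large enough that the kernel's tail does not spoil the higher derivatives. The size bound $1-b_n\lesssim(n/M)^2$ alone does not suffice, because a coefficientwise bound gives no uniform control of $\norm{\sum_{n\le m}(1-b_n)\Proj_n p}_\infty$.

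The missing idea is to factor the correction through the low-degree range. With $m_L$ the degree of $\Phi_L$,
\[
 \norm{(\id-\mathcal J_M)\mathcal V_L}
 \le\sup_{0\ne p\in\mathbb P_{m_L}}
 \frac{\norm{(\id-\mathcal J_M)p}_\infty}{\norm{p}_\infty}\,
 \norm{\mathcal V_L}.
\]
So all you need is $\sup_L\norm{\mathcal V_L}<\infty$ for one fixed filter, plus the Jackson estimate $\norm{(\id-\mathcal J_M)p}_\infty\lesssim(m/M)^2\norm{p}_\infty$ for $p\in\mathbb P_m$. The paper proves the latter from three facts about the Jackson kernel: positivity and unit mass, the second moment $\int\rho^2J\lesssim M^{-2}$, and Bernstein's inequality $\norm{g''}_\infty\le m^2\norm{g}_\infty$ along great circles. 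The linear Taylor term cancels after averaging over directions. No information about the Jackson multipliers is needed.

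\textbf{Lower bound: same route, but the stated target is too weak.} Your lower bound follows the paper's route: rotational averaging, then testing reproduction against a Fej\'er peak in $\mathbb P_L$, played against non-concentration of a degree-$N$ kernel. However, the inequality you single out, $\int\theta^2\abs{K}\ge cN^{-2}\int\abs{K}$, cannot produce the factor $L^2$. It is consistent with all but an $O(N^{-2})$ portion of $\abs{K}$ sitting far inside the $1/N$-cap. In that case $\int\min\{1,L^2\theta^2\}\abs{K}$ is only of order $N^{-2}$.

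What is needed is that the positive part of $K$ carries mass at least $3/4$ outside the cap of radius $c_d/(N+1)$ about $x$. This must be combined with a global bound $1-F_L/F_L(x)\ge c_0\min\{1,L^2\rho^2\}$, not just curvature at the peak. The cap statement follows at once from $\norm{K}_\infty\le D_{d,N}\norm{K}_{L^1}$ (reproducing kernel of $\mathbb P_N$ plus Cauchy--Schwarz). So the weighted Bernstein inequality and the Jacobi-weight analysis you flagged as the main obstacle are unnecessary.
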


Theorem~\ref{thm:intro} connects with three related strands of approximation
theory and numerical analysis.  On the circle, the two-scale problem is
closely related to generalized projections and delayed de la Vall\'ee--Poussin
means.  A prescribed de la Vall\'ee--Poussin mean with output bandwidth
\(sL-1\) has excess norm of order \(s^{-1}\) and ceases to be minimal once
\(s>2\); see
\cite{DeregowskaFoucartLewandowskaSkrzypek2018,
DeregowskaLewandowska2015,Mehta2015}.  By contrast,
\eqref{eq:intro-main} shows that the optimal excess over the full admissible
class has the strictly smaller order \(s^{-2}\).  On higher-dimensional
spheres, filtered approximation and filtered hyperinterpolation use the
transition band to obtain uniform stability; see
\cite{Sloan2011,SloanWomersley2012,WangSloan2017}.  These results show that the operator
norm remains bounded independently of the degree, but do not determine how
close the best possible norm can come to the positivity threshold \(1\).
Theorem~\ref{thm:intro} quantifies this distance in terms of the ratio of the
reproduced bandwidth to the output bandwidth.  Finally,
\eqref{eq:intro-main-positivity-loss}, together with
Proposition~\ref{prop:general-defect}, shows that the least 
worst-case undershoot below \(0\), or overshoot above \(1\), has the same
quadratic order.

The paper is organized as follows.  Section~\ref{sec:setup} reviews the
spherical harmonic framework and identifies the positivity loss of a zonal
operator with the negative mass of its kernel.
Section~\ref{sec:lower} reduces the extremal problem to zonal operators and
proves the lower bound using a low-frequency peak polynomial and a
concentration estimate for bandlimited kernels.
Section~\ref{sec:upper} constructs a matching near-positive operator by
combining a positive Jackson operator with a bounded flat-top correction.
Section~\ref{sec:consequences} discusses three consequences, involving generalized projections on the circle,
filtered hyperinterpolation, and effective maximum principles for spectral
methods.

\section{Preliminaries}
\label{sec:setup}

Let \(\sigma\) be the surface measure on \(\Sd\), normalized by
\(\sigma(\Sd)=1\), and
let
\(
 \rho(x,y):=\arccos(x\cdot y)
\)
be the geodesic distance between $x$ and $y$.  For \(\ell\ge0\), let
\(\mathbb H_\ell:=\mathbb H_\ell(\Sd)\) be the space of real spherical
harmonics of degree \(\ell\) with dimension
\(Z(d,\ell):=\dim\mathbb H_\ell\).  The orthogonal harmonic decomposition is
\begin{equation}\label{eq:harmonic-decomposition}
 L^2(\Sd)=\bigoplus_{\ell=0}^{\infty}\mathbb H_\ell.
\end{equation}
The space of real spherical polynomials of degree at most \(n\) is
\[
 \mathbb P_n:=\bigoplus_{\ell=0}^{n}\mathbb H_\ell,
\]
where the dimension of $\mathbb{P}_n$ is 
\begin{equation}\label{eq:dim-Pi}
 D_{d,n}:=\dim\mathbb P_n
 =\binom{n+d}{d}+\binom{n+d-1}{d}\asymp_d (n+1)^d.
\end{equation}
Let \(\{Y_{\ell,j}\}_{j=1}^{Z(d,\ell)}\) be a real orthonormal basis of
\(\mathbb H_\ell\).  The reproducing kernel of $\mathbb{H}_\ell$ is
\[
 \mathcal Z_\ell(x,y)
 :=\sum_{j=1}^{Z(d,\ell)}Y_{\ell,j}(x)Y_{\ell,j}(y),
\]
and the orthogonal projection \(\Proj_\ell\) onto \(\mathbb H_\ell\) is then defined by
\begin{equation}\label{eq:harmonic-projection}
 \Proj_\ell f(x)
 =\int_{\Sd}\mathcal Z_\ell(x,y)f(y)\,\dd\sigma(y).
\end{equation}
By the addition formula,
\begin{equation}\label{eq:addition-formula}
 \mathcal Z_\ell(x,y)
 =Z(d,\ell)P_\ell^{(d)}(x\cdot y),
\end{equation}
where, for \(d\ge2\), \(P_\ell^{(d)}\) is the Gegenbauer polynomial of
degree \(\ell\) normalized by \(P_\ell^{(d)}(1)=1\).  For \(d=1\), we set
\(P_0^{(1)}=1\) and \(P_\ell^{(1)}(\cos\theta)=\cos(\ell\theta)\) for
\(\ell\ge1\).

Let \(\nu_d\) be the probability measure obtained by pushing \(\sigma\)
forward under \(y\mapsto x\cdot y\), given explicitly by
\[
 \dd\nu_d(t)
 =\frac{\Gamma((d+1)/2)}{\sqrt{\pi}\,\Gamma(d/2)}
  (1-t^2)^{d/2-1}\,\dd t.
\]
Let \(k\in L^1([-1,1],\nu_d)\) be real-valued and set
\(K(x,y):=k(x\cdot y)\).  By the Funk--Hecke
formula, the associated integral operator
\[
 \mathcal T_Kf(x)
 :=\int_{\Sd}K(x,y)f(y)\,\dd\sigma(y)
\]
is diagonal with respect to the spherical harmonic decomposition:
\begin{equation}\label{eq:multiplier}
\mathcal T_KY_{\ell,j}=\widehat k(\ell)Y_{\ell,j},
 \end{equation}
where 
\[\widehat k(\ell)
 :=\int_{-1}^{1}k(t)P_\ell^{(d)}(t)\,\dd\nu_d(t).\]
If \(k\) is an algebraic polynomial of degree at most \(N\), then \(K\) has
bandwidth at most \(N\), and Gegenbauer orthogonality gives the unique expansion
\begin{equation}\label{eq:zonal-kernel}
 K(x,y)
 =\sum_{\ell=0}^{N}a_\ell\mathcal Z_\ell(x,y),
 \qquad
 a_\ell=\widehat k(\ell).
\end{equation}
It follows from \eqref{eq:multiplier} that
\(\mathcal T_K|_{\mathbb P_L}=\id\) if and only if \(a_\ell=1\) for all
\(0\le\ell\le L\).  In particular, \(\mathcal T_K1=a_0\), so
\(\mathcal T_K\) is unital precisely when
\(a_0=1\), in which case
\[
 \int_{\Sd}K(x,y)\,\dd\sigma(y)
 =\mathcal T_K1(x)=1,
 \qquad x\in\Sd.
\]
We refer the reader to \cite{MR2934227,DaiXu2013} for these standard facts on spheres.

For such a unital zonal operator, the positivity loss of $\mathcal{T}_K$ is the negative mass of its kernel.
For a real-valued function \(u\), let
\(u_+:=\max\{u,0\}\) and \(u_-:=\max\{-u,0\}\).  The representing measure
\(\mu_x^{\mathcal T_K}\) is now explicit in the form of
\[
 \dd\mu_x^{\mathcal T_K}(y)=K(x,y)\,\dd\sigma(y).
\]
Its negative mass is therefore given by
\[
 \int_{\Sd}K_-(x,y)\,\dd\sigma(y).
\]
Let
\[
 O(d+1)
 :=\left\{g\in\mathbb R^{(d+1)\times(d+1)}:g^{\mathsf T}g=I\right\}
\]
be the orthogonal group of \(\mathbb R^{d+1}\).  This compact group acts
transitively on \(\Sd\) by \(x\mapsto gx\) and preserves both the inner
product and the measure \(\sigma\).
Since \(K(x,y)=k(x\cdot y)\) and \(\sigma\) is rotation invariant,
this integral is independent of \(x\).
For any \(x,x'\in\Sd\), we choose
\(R\in O(d+1)\) such that \(Rx=x'\).  The change of variables \(y=Rz\)
gives
\[
 \int_{\Sd}K_-(x',y)\,\dd\sigma(y)
 =\int_{\Sd}[k(Rx\cdot Rz)]_-\,\dd\sigma(z)
 =\int_{\Sd}[k(x\cdot z)]_-\,\dd\sigma(z).
\]
Moreover, since \(\mu_x^{\mathcal T_K}\) is absolutely continuous with
respect to \(\sigma\), its Jordan components are
\[
 \dd\mu_{x,+}^{\mathcal T_K}(y)=K_+(x,y)\,\dd\sigma(y),
 \qquad
 \dd\mu_{x,-}^{\mathcal T_K}(y)=K_-(x,y)\,\dd\sigma(y).
\]
Hence
\begin{equation}\label{eq:zonal-positivity-loss}
 \mathfrak p(\mathcal T_K)
 =\sup_{x\in\Sd}\mu_{x,-}^{\mathcal T_K}(\Sd)
 =\sup_{x\in\Sd}\int_{\Sd}K_-(x,y)\,\dd\sigma(y)
 =\int_{\Sd}K_-(x,y)\,\dd\sigma(y),
\end{equation}
where the last expression has the same value for every \(x\in\Sd\).
Proposition~\ref{prop:general-defect} now yields
\begin{equation}\label{eq:norm-negative-mass}
 \norm{\mathcal T_K}_{C(\Sd)\to C(\Sd)}
 =\sup_{x\in\Sd}\int_{\Sd}\abs{K(x,y)}\,\dd\sigma(y)
 =\int_{\Sd}\abs{K(x,y)}\,\dd\sigma(y)
 =1+2\mathfrak p(\mathcal T_K).
\end{equation}
Thus, for zonal operators, negative kernel mass is exactly the positivity
loss and half the excess norm.  By \eqref{eq:general-undershoot}, it is also
the worst violation of the interval \([0,1]\).

\section{Symmetry reduction and the lower bound}
\label{sec:lower}

To obtain the lower bound in Theorem~\ref{thm:intro}, we first reduce the extremal problem \eqref{eq:intro-lambda} from arbitrary admissible operators to
zonal operators.  Averaging an admissible operator over \(O(d+1)\) preserves reproduction and
bandwidth and does not increase its norm.  To state the resulting reduction,
we define
\begin{equation}\label{eq:zonal-admissible-class}
 \mathfrak T_d(L,N)
 :=\left\{\mathcal T_K:C(\Sd)\to\mathbb P_N:
 K\text{ is a real zonal kernel of bandwidth at most }N,\,
 \mathcal T_K|_{\mathbb P_L}=\id\right\}.
\end{equation}
Here bandwidth at most \(N\) means that \(K\) has the finite expansion
\eqref{eq:zonal-kernel}.
The following proposition reduces the generalized-projection problem
\eqref{eq:intro-lambda} to the minimization of positivity loss over
zonal operators.
\begin{proposition}[Symmetry reduction]\label{prop:rotation}
For \(0\le L\le N\),
\[
 \lambda_d(L,N)
 =\inf_{\mathcal T_K\in\mathfrak T_d(L,N)}
   \norm{\mathcal T_K}_{C(\Sd)\to C(\Sd)}
 =1+2\inf_{\mathcal T_K\in\mathfrak T_d(L,N)}
       \mathfrak p(\mathcal T_K).
\]
\end{proposition}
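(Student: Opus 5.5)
The plan is the standard averaging argument over the compact group \(O(d+1)\). The inequality \(\lambda_d(L,N)\le\inf_{\mathfrak T_d(L,N)}\norm{\mathcal T_K}\) is immediate, because every \(\mathcal T_K\in\mathfrak T_d(L,N)\) is admissible in \eqref{eq:intro-lambda}: it reproduces \(\mathbb P_L\) and maps into \(\mathbb P_N\). The second equality in the statement then follows from \eqref{eq:norm-negative-mass}, since each such \(\mathcal T_K\) is unital (\(a_0=1\)). The real task is the reverse inequality. Given an admissible \(\mathcal A\), I will produce a zonal \(\mathcal T_K\in\mathfrak T_d(L,N)\) with \(\norm{\mathcal T_K}\le\norm{\mathcal A}\).

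For \(g\in O(d+1)\), let \((\tau_g f)(x):=f(g^{-1}x)\). This is an isometry of \(C(\Sd)\) that preserves each \(\mathbb P_n\), because \(\mathbb P_n\) is the restriction to the sphere of polynomials of degree at most \(n\). Define
\[
 \bar{\mathcal A}f(x):=\int_{O(d+1)}\bigl(\tau_g\mathcal A\tau_{g^{-1}}f\bigr)(x)\,\dd g
\]
with normalized Haar measure. For fixed \(f\), the map \(g\mapsto\tau_g\mathcal A\tau_{g^{-1}}f\) is continuous into \(C(\Sd)\). Indeed, \(g\mapsto\tau_{g^{-1}}f\) is norm continuous by uniform continuity of \(f\), and \(g\mapsto\tau_g h\) is norm continuous for fixed \(h\). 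Hence the integral is a Bochner or Riemann integral in \(C(\Sd)\), and it agrees with the pointwise definition. Consequently \(\norm{\bar{\mathcal A}f}_\infty\le\norm{\mathcal A}\norm{f}_\infty\). Each conjugate \(\tau_g\mathcal A\tau_{g^{-1}}\) fixes \(\mathbb P_L\), since \(\tau_{g^{-1}}p\in\mathbb P_L\) is fixed by \(\mathcal A\). Its range lies in \(\tau_g\mathbb P_N=\mathbb P_N\). Since \(\mathbb P_N\) is finite dimensional and closed, the averaged operator also has range in \(\mathbb P_N\) and reproduces \(\mathbb P_L\). By invariance of Haar measure, \(\bar{\mathcal A}\) commutes with every \(\tau_h\).

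The remaining step, and the main obstacle, is to identify a rotation-equivariant operator with range in \(\mathbb P_N\) as some \(\mathcal T_K\) with a zonal polynomial kernel. Because the range is finite dimensional, I would write \(\bar{\mathcal A}f(x)=\sum_{\ell\le N}\sum_j\phi_{\ell,j}(f)Y_{\ell,j}(x)\) with \(\phi_{\ell,j}\in C(\Sd)^*\). Equivariance makes \(f\mapsto\Proj_\ell\bar{\mathcal A}f\) an intertwiner. The cleanest route is to evaluate at a fixed point. The functional \(f\mapsto\bar{\mathcal A}f(e)\) is invariant under the stabilizer \(H\cong O(d)\) of \(e\). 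Its representing measure \(\mu_e\) is therefore \(H\)-invariant, so \(\mu_e\) is the pullback of a measure \(\eta\) on \([-1,1]\) under \(y\mapsto e\cdot y\).

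Next I would show that this measure is absolutely continuous with polynomial density. Applying \(\bar{\mathcal A}\) to \(f\in\mathbb H_m\) with \(m>N\), equivariance gives an intertwiner \(\mathbb H_m\to\mathbb P_N\), which vanishes by Schur's lemma because the \(\mathbb H_\ell\) are pairwise inequivalent irreducible \(O(d+1)\)-modules. On \(\mathbb H_\ell\) with \(\ell\le N\), Schur's lemma gives multiplication by a scalar \(a_\ell\). Reproduction forces \(a_\ell=1\) for \(\ell\le L\). Set \(K=\sum_{\ell\le N}a_\ell\mathcal Z_\ell\). Then \(\bar{\mathcal A}\) and \(\mathcal T_K\) agree on all spherical polynomials, which are dense in \(C(\Sd)\), and both operators are bounded. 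Hence \(\bar{\mathcal A}=\mathcal T_K\in\mathfrak T_d(L,N)\) and \(\norm{\mathcal T_K}\le\norm{\mathcal A}\). With Schur's lemma this bypasses the measure-theoretic description, so the only delicate points are the continuity of the averaging and the irreducibility and inequivalence of the \(\mathbb H_\ell\) under \(O(d+1)\). For \(d=1\), each \(\mathbb H_\ell\) is irreducible under \(O(2)\), though not under \(SO(2)\). Taking the infimum over \(\mathcal A\) then completes the proof.
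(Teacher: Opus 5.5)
Your proposal is correct and follows essentially the same route as the paper: Haar averaging over \(O(d+1)\) with norm non-increase, preservation of reproduction and range, Schur's lemma to make \(\bar{\mathcal A}\) a degree-wise multiplier \(\sum_{\ell\le N}a_\ell\Proj_\ell\), and density of spherical polynomials to identify it with \(\mathcal T_K\). The fixed-point measure detour is unnecessary, as you note. One small point of care is that a \emph{real} scalar from Schur's lemma requires absolute irreducibility, which the paper secures by complexifying \(\mathbb H_\ell\). Relatedly, for \(d=1\) the real space \(\mathbb H_\ell\) is in fact irreducible under \(SO(2)\); what fails there is that its commutant is \(\mathbb C\) rather than \(\mathbb R\), and that is why the reflections in \(O(2)\) are needed.
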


\begin{proof}
Since every operator in \(\mathfrak T_d(L,N)\) is admissible in
\eqref{eq:intro-lambda}, we have
\[
 \lambda_d(L,N)
 \le\inf_{\mathcal T_K\in\mathfrak T_d(L,N)}\norm{\mathcal T_K}.
\]
For the reverse inequality, we fix an arbitrary operator \(\mathcal A\)
admissible in \eqref{eq:intro-lambda}.  Let \(G:=O(d+1)\). Each
\(g\in G\) acts isometrically on \(C(\Sd)\) by
\((\mathcal R_gf)(x):=f(g^{-1}x)\),
and we average the conjugates of \(\mathcal A\) by setting
\[
 \overline{\mathcal A}f
 :=\int_G\mathcal R_g^{-1}\mathcal A\mathcal R_gf\,\dd g
\]
for $f\in C(\Sd)$, where \(\dd g\) is the normalized Haar probability measure on \(G\).  For
fixed \(f\), the integrand
is a continuous \(\mathbb P_N\)-valued function of \(g\), so this
finite-dimensional Bochner integral is well defined.
Since \(\mathcal R_g\) is an isometry on \(C(\Sd)\), and hence
\(\|\mathcal R_g^{-1}\mathcal A\mathcal R_g\|=\|\mathcal A\|\),
we have
\begin{equation*}
 \|\overline{\mathcal A}f\|_\infty\le
 \int_G
 \|\mathcal R_g^{-1}\mathcal A\mathcal R_gf\|_\infty
 \,\dd g =
 \int_G
 \|\mathcal A\mathcal R_gf\|_\infty
 \,\dd g \le
 \|\mathcal A\|
 \int_G\|\mathcal R_gf\|_\infty\,\dd g
 =
 \|\mathcal A\|\,\|f\|_\infty.
\end{equation*}
Here the last equality uses \(\int_G1\,\dd g=1\).  Thus
\(\norm{\overline{\mathcal A}}\le\norm{\mathcal A}\).  Since
\(\mathcal R_g^{-1}\mathcal A\mathcal R_gf\in\mathbb P_N\), after averaging
we still obtain \(\ran\overline{\mathcal A}\subseteq\mathbb P_N\).
Since \(\mathcal R_g\) preserves \(\mathbb P_L\) and \(\mathcal A\) reproduces
\(\mathbb P_L\), for every \(p\in\mathbb P_L\) and \(g\in G\),
\[\mathcal R_g^{-1}\mathcal A\mathcal R_gp
 =\mathcal R_g^{-1}\mathcal R_gp=p.\]
After averaging, we have \(\overline{\mathcal A}p=p\), implying that $\overline{\mathcal A}$ also reproduces
\(\mathbb P_L\).
For $h\in G$, since
\[\mathcal{R}_h^{-1} \overline{\mathcal{A}} \mathcal{R}_h
 =\int_G \mathcal{R}_h^{-1} \mathcal{R}_g^{-1} \mathcal{A}
 \mathcal{R}_g \mathcal{R}_h\,\dd g
 =\int_G \mathcal{R}_{gh}^{-1} \mathcal{A} \mathcal{R}_{gh}\,\dd g,\]
the right invariance of Haar measure under the change of variables \(u=gh\)
yields \(\mathcal R_h^{-1}\overline{\mathcal A}\mathcal R_h
=\overline{\mathcal A}\), that is,
\(\overline{\mathcal A}\mathcal R_h
 =\mathcal R_h\overline{\mathcal A}\). Thus \(\overline{\mathcal A}\) is \(O(d+1)\)-equivariant.
Since each \(\Proj_\ell\) is also \(O(d+1)\)-equivariant,
for \(j\ge0\) and \(0\le\ell\le N\), the map
\[
 \Proj_\ell\overline{\mathcal A}|_{\mathbb H_j}:
 \mathbb H_j\longrightarrow\mathbb H_\ell
\]
intertwines the orthogonal action. Indeed, for \(d\ge2\), the complexified harmonic
spaces \(\mathbb H_\ell^{\mathbb C}\) are pairwise inequivalent irreducible
\(O(d+1)\)-representations.  After complexification, Schur's lemma therefore
implies that the above map vanishes for \(j\ne\ell\) and is a scalar
multiple of the identity for \(j=\ell\).  Since the original operator is
real, the scalar is real.  For \(d=1\), the complexified spaces are likewise
pairwise inequivalent irreducible \(O(2)\)-representations: a reflection
interchanges the two rotation weights.  The same application of Schur's
lemma therefore gives the conclusion.
Thus \(\overline{\mathcal A}\) does not mix harmonic degrees.  Since
\(\ran\overline{\mathcal A}\subseteq\mathbb P_N\), there are real numbers
\(a_0,\ldots,a_N\) such that, for every spherical polynomial \(f\),
\begin{equation}\label{eq:bothsides}
 \overline{\mathcal A}f
 =\sum_{\ell=0}^N a_\ell\Proj_\ell f.
\end{equation}
Since spherical polynomials are dense in \(C(\Sd)\) and both sides of
\eqref{eq:bothsides} define bounded operators on \(C(\Sd)\), the expression
\eqref{eq:bothsides} extends to every \(f\in C(\Sd)\).  By
\eqref{eq:harmonic-projection},
\(
 \overline{\mathcal A}f(x)
 =\int_{\Sd}K_{\overline{\mathcal A}}(x,y)f(y)\,\dd\sigma(y),
\)
where
\[ K_{\overline{\mathcal A}}(x,y)
 :=\sum_{\ell=0}^N a_\ell\mathcal Z_\ell(x,y).\]
The addition formula \eqref{eq:addition-formula} shows that
\(K_{\overline{\mathcal A}}(x,y)\) depends only on \(x\cdot y\) and is
therefore zonal.  Together with the range and reproduction properties established
above, this gives
$\overline{\mathcal A}\in\mathfrak T_d(L,N)$.
Therefore
\[
 \inf_{\mathcal T_K\in\mathfrak T_d(L,N)}\norm{\mathcal T_K}
 \le\norm{\overline{\mathcal A}}
 \le\norm{\mathcal A}.
\]
Since \(\mathcal A\) is arbitrary, taking the infimum over all admissible
operators proves the reverse inequality.  The final equality follows from
\eqref{eq:norm-negative-mass}.
\end{proof}

By Proposition~\ref{prop:rotation}, the lower bound for
\(\lambda_d(L,N)-1\) now reduces to a uniform lower bound for the positivity
loss over \(\mathfrak T_d(L,N)\).  The argument uses a low-frequency peak
polynomial and the reproducing kernel of \(\mathbb P_N\).

\begin{lemma}[Fej\'er peak]\label{lem:fejer-peak}
For \(L\ge1\), let
\begin{equation}\label{eq:R-L}
 R_L(\cos\theta)
 :=\frac{1}{(L+1)^2}
 \left(
 \frac{\sin((L+1)\theta/2)}{\sin(\theta/2)}
 \right)^2,
 \qquad 0\le\theta\le\pi.
\end{equation}
For \(e\in\Sd\), let 
\[R_{L,e}(y):=R_L(e\cdot y)\] 
and 
\[q_{L,e}(y):=1-R_{L,e}(y).\]
Then \(R_{L,e}\in\mathbb P_L\), $0\le R_{L,e}\le1$, and $R_{L,e}(e)=1$. Moreover,
 \(q_{L,e}\in\mathbb P_L\), $0\le q_{L,e}\le1$, $q_{L,e}(e)=0$, and there is an absolute constant \(c_0>0\) such that
\begin{equation}\label{eq:q-peak}
 q_{L,e}(y)
 \ge c_0\min\{1,L^2\rho(e,y)^2\},
 \qquad y\in\Sd.
\end{equation}
\end{lemma}
\begin{proof}
The finite geometric sum and the Fej\'er identity give
\[
 \left(
  \frac{\sin((L+1)\theta/2)}{\sin(\theta/2)}
 \right)^2
 =
 \left|\sum_{j=0}^L e^{ij\theta}\right|^2
 =
 (L+1)+2\sum_{k=1}^L(L+1-k)\cos(k\theta),
\]
where the value at \(\theta=0\) is understood by continuity.  Since
\(\cos(k\theta)=T_k(\cos\theta)\), where \(T_k\) is the Chebyshev polynomial of
degree \(k\),  it follows that \(R_L\) is an algebraic polynomial of degree at most \(L\).
Hence \(R_{L,e}\in\mathbb P_L\) and \(q_{L,e}\in\mathbb P_L\).  Since
\[
 0\le R_L(\cos\theta)
 =\frac{1}{(L+1)^2}
  \left|\sum_{j=0}^L e^{ij\theta}\right|^2
 \le1,
\]
and \(R_L(1)=1\), it immediately follows that
\(0\le R_{L,e}\le1\), \(R_{L,e}(e)=1\), \(0\le q_{L,e}\le1\), and
\(q_{L,e}(e)=0\).
It remains to prove \eqref{eq:q-peak}.  Let
$m:=L+1$, $x:={\theta}/{2}$, and $A_m(x):={\sin(mx)}/{(m\sin x)}$.
Then
\[
 R_L(\cos\theta)=A_m(x)^2.
\]
Let \(U\) be uniformly distributed on
$\{m-1,m-3,\ldots,-m+1\}$.
The finite geometric sum gives
\[
 A_m(x)
 =\frac1m\sum_{j=0}^{m-1}e^{i(m-1-2j)x}
 =\E e^{iUx}.
\]
If \(V\) is an independent copy of \(U\), then
$|A_m(x)|^2
 =\E e^{i(U-V)x}$.
Since \(U-V\) has a symmetric distribution, we have
\begin{equation}\label{eq:variance-fejer}
 1-|A_m(x)|^2
 =\E\bigl[1-\cos((U-V)x)\bigr].
\end{equation}
Suppose first that \(mx\le2\).  Since
\(|U-V|\le2(m-1)\), we have \(|(U-V)x|\le4\).  Hence
$1-\cos u\ge c u^2$ for $|u|\le4$ and an absolute constant \(c>0\). Together with $\E U=0$ and
\[
 \operatorname{Var}(U)
 =\frac1m\sum_{j=0}^{m-1}(m-1-2j)^2
 =\frac{m^2-1}{3},
\]
 \eqref{eq:variance-fejer} yields
\[
 1-|A_m(x)|^2
 \ge c x^2\E(U-V)^2
 =\frac{2c(m^2-1)}{3}x^2
 \ge c' m^2x^2,
\]
since \(m\ge2\).
If \(mx\ge2\), then \(0\le x\le\pi/2\) and
\(\sin x\ge2x/\pi\), implying
\[
 |A_m(x)|
 \le\frac{1}{m\sin x}
 \le\frac{\pi}{2mx}
 \le\frac{\pi}{4}.
\]
Thus
\[
 1-|A_m(x)|^2\ge1-\frac{\pi^2}{16}.
\]
Combining the two cases,
\[
 1-|A_m(x)|^2
 \ge c\min\{1,m^2x^2\}.
\]
Recalling \(m=L+1\) and \(x=\theta/2\), and letting
\(\theta=\rho(e,y)\), we have \eqref{eq:q-peak}.
\end{proof}

We then show that a polynomial kernel of bandwidth \(N\) cannot concentrate
all of its positive mass below the spectral scale \(N^{-1}\). To prove this, we use the following standard \(L^1\)-Nikolskii estimate in a form with an
explicit constant.

\begin{lemma}[\(L^1\)-Nikolskii estimate]
\label{lem:L1-Nikolskii}
For every \(P\in\mathbb P_N\),
\[\norm{P}_\infty\le D_{d,N}\norm{P}_{L^1(\Sd)},\]
where \(D_{d,N}=\dim\mathbb P_N\).
\end{lemma}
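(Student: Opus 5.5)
The plan is to use the reproducing kernel of \(\mathbb P_N\). Let
\[
 K_N(x,y):=\sum_{\ell=0}^{N}\mathcal Z_\ell(x,y).
\]
For every \(P\in\mathbb P_N\) and \(x\in\Sd\),
\[
 P(x)=\int_{\Sd}K_N(x,y)P(y)\,\dd\sigma(y).
\]
Hence
\[
 \abs{P(x)}\le\sup_{y\in\Sd}\abs{K_N(x,y)}\,\norm{P}_{L^1(\Sd)}.
\]
It therefore suffices to show that \(\abs{K_N(x,y)}\le D_{d,N}\) for all \(x,y\).

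For the kernel bound, write \(K_N(x,y)=\sum_{\ell,j}Y_{\ell,j}(x)Y_{\ell,j}(y)\), where the sum runs over an orthonormal basis of \(\mathbb P_N\). The Cauchy--Schwarz inequality gives
\[
 \abs{K_N(x,y)}\le K_N(x,x)^{1/2}K_N(y,y)^{1/2}.
\]
By the addition formula \eqref{eq:addition-formula} together with \(P_\ell^{(d)}(1)=1\), the diagonal value is constant:
\[
 K_N(x,x)=\sum_{\ell=0}^N Z(d,\ell)=D_{d,N}.
\]
Alternatively, integrate \(K_N(x,x)=\sum_{\ell,j}Y_{\ell,j}(x)^2\) over \(\Sd\) to get \(\dim\mathbb P_N\), and use rotation invariance to see that this equals the pointwise value. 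For \(d=1\) the same computation applies with \(Z(1,0)=1\) and \(Z(1,\ell)=2\); the diagonal is then \(1+2N=D_{1,N}\), which agrees with \eqref{eq:dim-Pi}.

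Combining the two steps gives \(\norm{P}_\infty\le D_{d,N}\norm{P}_{L^1(\Sd)}\). The argument has no real obstacle. The only point needing care is identifying the diagonal of the reproducing kernel with \(\dim\mathbb P_N\), including the normalization convention for \(d=1\). The normalization \(\sigma(\Sd)=1\) is what makes the constant exactly \(D_{d,N}\).
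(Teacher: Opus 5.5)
Your proposal is correct and follows the paper's proof essentially verbatim: you reproduce $P$ with the kernel $\sum_{\ell=0}^{N}\mathcal Z_\ell(x,y)$, bound the kernel off the diagonal by Cauchy--Schwarz, and identify the diagonal with $D_{d,N}$ via the addition formula. Your explicit check of the $d=1$ normalization ($1+2N=D_{1,N}$) is a correct sanity check that the paper leaves implicit.
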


\begin{proof}
Let $\mathcal D_N(x,y)
 :=\sum_{\ell=0}^{N}\mathcal Z_\ell(x,y)$,
the reproducing kernel of \(\mathbb P_N\). Thus
\begin{equation}\label{eq:PN-reproduction}
 P(x)=\int_{\Sd}\mathcal D_N(x,y)P(y)\,\dd\sigma(y)
 \qquad \forall P\in\mathbb P_N.
\end{equation}
By the addition formula \eqref{eq:addition-formula},
\[\mathcal D_N(x,x)
 =\sum_{\ell=0}^N Z(d,\ell)
 =D_{d,N}.\]
Using the orthonormal-basis expansion of \(\mathcal D_N\) and
Cauchy--Schwarz,
\[
 \abs{\mathcal D_N(x,y)}
 \le
 \sqrt{\mathcal D_N(x,x)\mathcal D_N(y,y)}
 =D_{d,N}.
\]
Hence \eqref{eq:PN-reproduction} gives
\[
 \abs{P(x)}
 \le
 D_{d,N}\int_{\Sd}\abs{P(y)}\,\dd\sigma(y)
 =
 D_{d,N}\norm{P}_{L^1(\Sd)}.
\]
Taking the supremum over \(x\in\Sd\) proves
the lemma.
\end{proof}

Let $B(e,r):=\{y\in\Sd:\rho(e,y)\le r\}$ be the spherical cap with radius $r$ centered at $e$.
There are constants \(c_d,C_d>0\) such that
\begin{equation}\label{eq:cap-volume}
 c_dr^d\le\sigma(B(e,r))\le C_dr^d,
 \qquad 0<r\le1.
\end{equation}
Since \(D_{d,N}\asymp_d(N+1)^d\), a cap of measure
\(D_{d,N}^{-1}\) has radius comparable to \((N+1)^{-1}\), the natural
spatial scale for bandwidth \(N\).  Combined with
Lemma~\ref{lem:L1-Nikolskii}, this prevents a normalized bandlimited polynomial
from concentrating all of its positive mass inside such a cap.
\begin{lemma}[Positive mass outside the spectral scale]
\label{lem:mass-outside}
Fix \(e\in\Sd\), and let \(P\in\mathbb P_N\) be real-valued with
$\int_{\Sd}P\,\dd\sigma=1$.
Set $\Lambda:=\norm{P}_{L^1(\Sd)}$,
and let \(r_N\in(0,\pi)\) be determined by
\[
 \sigma(B(e,r_N))=\frac{1}{4D_{d,N}}.
\]
Then $r_N\ge c_d(N+1)^{-1}$,
and
\begin{equation}\label{eq:positive-mass-outside}
 \int_{\Sd\setminus B(e,r_N)}P_+(y)\,\dd\sigma(y)
 \ge\frac{\Lambda+2}{4}
 \ge\frac34.
\end{equation}
\end{lemma}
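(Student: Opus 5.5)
The argument has two parts: a radius bound from the cap-volume estimate, and a mass bound from Lemma~\ref{lem:L1-Nikolskii} applied to $P$ on the small cap.

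\textbf{Radius bound.} The cap $B(e,r)$ grows continuously from measure $0$ to $1$, so $r_N$ is well defined. Since $D_{d,N}\ge1$, the target measure $1/(4D_{d,N})$ is at most $1/4$.

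We must rule out $r_N$ being small compared with $(N+1)^{-1}$. If $r_N\ge1$, the bound is immediate because $(N+1)^{-1}\le1$. Otherwise the upper inequality in \eqref{eq:cap-volume} gives
\[
\frac{1}{4D_{d,N}}=\sigma(B(e,r_N))\le C_d r_N^d .
\]
Combining this with $D_{d,N}\le C'_d(N+1)^d$ from \eqref{eq:dim-Pi} yields $r_N\ge c_d(N+1)^{-1}$.

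\textbf{Mass bound.} Write $B:=B(e,r_N)$. Since $\int P\,\dd\sigma=1$ and $\int|P|\,\dd\sigma=\Lambda$, we have
\[
\int_{\Sd}P_+\,\dd\sigma=\frac{\Lambda+1}{2},\qquad \int_{\Sd}P_-\,\dd\sigma=\frac{\Lambda-1}{2}.
\]
By Lemma~\ref{lem:L1-Nikolskii}, $\norm{P}_\infty\le D_{d,N}\Lambda$. Hence
\[
\int_B P_+\,\dd\sigma\le \sigma(B)\,D_{d,N}\Lambda=\frac{\Lambda}{4}.
\]
Subtracting,
\[
\int_{\Sd\setminus B}P_+\,\dd\sigma\ge\frac{\Lambda+1}{2}-\frac{\Lambda}{4}=\frac{\Lambda+2}{4}.
\]
Finally $\Lambda\ge\int P\,\dd\sigma=1$, so $(\Lambda+2)/4\ge3/4$.

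I see no real obstacle here. The only care needed is in the radius step, where the case $r_N\ge1$ must be handled separately because \eqref{eq:cap-volume} is stated only for $r\le1$.
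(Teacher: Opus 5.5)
Your proposal is correct and follows the paper's proof essentially step by step. The radius bound comes from the cap-volume estimate \eqref{eq:cap-volume} together with $D_{d,N}\asymp_d(N+1)^d$, with the case $r_N\ge1$ absorbed by taking $c_d\le1$; the mass bound comes from applying Lemma~\ref{lem:L1-Nikolskii} on the cap and subtracting from $\int_{\Sd}P_+\,\dd\sigma=(\Lambda+1)/2$.
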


\begin{proof}
If \(r_N\le1\), then \eqref{eq:cap-volume} and
\(D_{d,N}\asymp_d(N+1)^d\) give
\[
 \frac{1}{4D_{d,N}}
 =\sigma(B(e,r_N))
 \le C_dr_N^d,
\]
hence \(r_N\ge c_d(N+1)^{-1}\).  If \(r_N>1\), the same estimate follows
after decreasing \(c_d\).
By Lemma~\ref{lem:L1-Nikolskii},
$\norm{P}_\infty\le D_{d,N}\Lambda$.
Therefore
\[
 \int_{B(e,r_N)}P_+\,\dd\sigma
 \le \norm{P}_\infty\sigma(B(e,r_N))
 \le\frac{\Lambda}{4}.
\]
On the other hand,
since $\int_{\Sd}P_+\,\dd\sigma-\int_{\Sd}P_-\,\dd\sigma=1$
and $\int_{\Sd}P_+\,\dd\sigma+\int_{\Sd}P_-\,\dd\sigma=\Lambda$,
we have
\[
 \int_{\Sd}P_+\,\dd\sigma=\frac{\Lambda+1}{2}.
\]
It follows that
\[
 \int_{\Sd\setminus B(e,r_N)}P_+\,\dd\sigma
 \ge
 \frac{\Lambda+1}{2}-\frac{\Lambda}{4}
 =
 \frac{\Lambda+2}{4}.
\]
Finally, since $\Lambda=\norm{P}_{L^1(\Sd)}
 \ge\left|\int_{\Sd}P\,\dd\sigma\right|=1$, we have the last inequality in
\eqref{eq:positive-mass-outside}.
\end{proof}

We now combine the peak and concentration estimates to establish the lower bound.

\begin{theorem}[Lower bound]\label{thm:lower}
There exists \(c_d>0\), depending only on \(d\), such that for every
\(1\le L\le N\) and every operator \(\mathcal A\) admissible in
\eqref{eq:intro-lambda}:
\begin{equation}\label{eq:individual-operator-lower}
 \mathfrak p(\mathcal A)
 \ge \frac{c_d}{2}\left(\frac{L}{N+1}\right)^2,
\end{equation}
and
\begin{equation}\label{eq:lower-lambda}
 \lambda_d(L,N)-1
 \ge c_d\left(\frac{L}{N+1}\right)^2.
\end{equation}
\end{theorem}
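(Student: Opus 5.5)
We first reduce everything to zonal operators. By Proposition~\ref{prop:rotation}, it suffices to prove $\mathfrak p(\mathcal T_K)\ge \frac{c_d}{2}(L/(N+1))^2$ for every $\mathcal T_K\in\mathfrak T_d(L,N)$. Indeed, that bound gives \eqref{eq:lower-lambda} at once. For an arbitrary admissible $\mathcal A$, we pass to the average $\overline{\mathcal A}$. Since $\norm{\overline{\mathcal A}}\le\norm{\mathcal A}$ and both operators are unital, Proposition~\ref{prop:general-defect} gives $1+2\mathfrak p(\mathcal A)\ge 1+2\mathfrak p(\overline{\mathcal A})$. This yields \eqref{eq:individual-operator-lower}.

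Now fix a zonal $\mathcal T_K$ and a point $e\in\Sd$, and set $P(y):=K(e,y)\in\mathbb P_N$. Then $\int P\,\dd\sigma=1$, and $\mathfrak p(\mathcal T_K)=\int P_-\,\dd\sigma$ by \eqref{eq:zonal-positivity-loss}. We test reproduction on the Fej\'er peak $q_{L,e}\in\mathbb P_L$ from Lemma~\ref{lem:fejer-peak}. Since $q_{L,e}(e)=0$, we get
\[
0=q_{L,e}(e)=\mathcal T_Kq_{L,e}(e)=\int_{\Sd}P\,q_{L,e}\,\dd\sigma
=\int P_+q_{L,e}\,\dd\sigma-\int P_-q_{L,e}\,\dd\sigma .
\]
Because $0\le q_{L,e}\le1$, the negative part contributes at most $\int P_-\,\dd\sigma=\mathfrak p(\mathcal T_K)$. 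Therefore
\[
\mathfrak p(\mathcal T_K)\ge\int_{\Sd}P_+\,q_{L,e}\,\dd\sigma\ge\int_{\Sd\setminus B(e,r_N)}P_+\,q_{L,e}\,\dd\sigma .
\]

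Next we combine this with the concentration estimate. Outside $B(e,r_N)$, \eqref{eq:q-peak} and Lemma~\ref{lem:mass-outside} give $q_{L,e}\ge c_0\min\{1,L^2r_N^2\}\ge c_0\min\{1,c_d^2L^2/(N+1)^2\}$. Since $L\le N$, the ratio $L/(N+1)$ is less than $1$, so the minimum is at least $c_0\min\{1,c_d^2\}\,(L/(N+1))^2$. Lemma~\ref{lem:mass-outside} also bounds the positive mass outside the cap below by $3/4$. Multiplying the two bounds gives
\[
\mathfrak p(\mathcal T_K)\ge \tfrac34 c_0\min\{1,c_d^2\}\left(\frac{L}{N+1}\right)^2,
\]
and we rename the constant.

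The only delicate point is the sign bookkeeping in the displayed identity. We must use $q_{L,e}\le1$ to absorb the negative part, and $q_{L,e}\ge0$ to discard the positive mass inside the cap. Everything else follows from the lemmas already established.
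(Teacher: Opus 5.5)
Your proposal is correct and follows the paper's proof essentially step for step. The key steps match: the Fej\'er peak test \(\mathcal T_K q_{L,e}(e)=0\) bounds \(\int_{\Sd} q_{L,e}P_+\,\dd\sigma\) by the negative mass, Lemma~\ref{lem:mass-outside} supplies positive mass at least \(3/4\) outside \(B(e,r_N)\), and the rotational average transfers the zonal bound to an arbitrary admissible \(\mathcal A\). Your explicit constant \(\frac34 c_0\min\{1,c_d^2\}\) simply makes precise the paper's ``after decreasing \(c_d\) if necessary.''
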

\begin{proof}
Fix \(\mathcal T_K\in\mathfrak T_d(L,N)\) and \(e\in\Sd\).  Let
\[
 K_+(y):=\max\{K(e,y),0\}\]
 and 
 \[
 K_-(y):=\max\{-K(e,y),0\}
\]
for the positive and negative parts of \(K(e,\cdot)\), respectively, and let
\[
 m_-:=\int_{\Sd}K_-(y)\,\dd\sigma(y)
 =\mathfrak p(\mathcal T_K).
\]
Let \(q_{L,e}\) be given by Lemma~\ref{lem:fejer-peak}.  Since
\(q_{L,e}\in\mathbb P_L\), \(q_{L,e}(e)=0\), and
\(\mathcal T_K\) reproduces \(\mathbb P_L\), we have
\[
 0
 =\mathcal T_Kq_{L,e}(e)
 =\int_{\Sd}q_{L,e}(y)
   \bigl(K_+(y)-K_-(y)\bigr)\,\dd\sigma(y).
\]
Hence
\begin{equation}\label{eq:balance}
 \int_{\Sd}q_{L,e}K_+\,\dd\sigma
 =
 \int_{\Sd}q_{L,e}K_-\,\dd\sigma
 \le m_-,
\end{equation}
since \(0\le q_{L,e}\le1\). Now since \(K(e,\cdot)\in\mathbb P_N\) and
$\int_{\Sd}K(e,y)\,\dd\sigma(y)
 =\mathcal T_K1(e)
 =1$,
Lemma~\ref{lem:mass-outside}, applied to \(P=K(e,\cdot)\), gives a radius
\(r_N\ge c_d(N+1)^{-1}\) such that
\[
 \int_{\Sd\setminus B(e,r_N)}K_+\,\dd\sigma\ge\frac34.
\]
On \(\Sd\setminus B(e,r_N)\), the Fej\'er peak estimate
\eqref{eq:q-peak} gives
$q_{L,e}(y)
 \ge c_0\min\{1,L^2r_N^2\}$.
Therefore
\[
 \int_{\Sd}q_{L,e}K_+\,\dd\sigma
 \ge
 \frac{3c_0}{4}\min\{1,L^2r_N^2\}.
\]
Since \(r_N\ge c_d(N+1)^{-1}\) and \(L\le N\), after decreasing \(c_d\)
if necessary,
\begin{equation}\label{eq:weighted-positive-lower}
 \int_{\Sd}q_{L,e}K_+\,\dd\sigma
 \ge
 \frac{c_d}{2}\left(\frac{L}{N+1}\right)^2.
\end{equation}
Combining \eqref{eq:balance} and
\eqref{eq:weighted-positive-lower} proves that 
for every
\(1\le L\le N\) and every
\(\mathcal T_K\in\mathfrak T_d(L,N)\),
\begin{equation}\label{eq:individual-kernel-lower}
 \mathfrak p(\mathcal T_K)
 \ge \frac{c_d}{2}\left(\frac{L}{N+1}\right)^2,
 \qquad
 \norm{\mathcal T_K}_{C(\Sd)\to C(\Sd)}-1
 =2\mathfrak p(\mathcal T_K)
 \ge c_d\left(\frac{L}{N+1}\right)^2.
\end{equation}
If \(\mathcal A\) is admissible in \eqref{eq:intro-lambda}, let
\(\overline{\mathcal A}\in\mathfrak T_d(L,N)\) be its rotational average
from Proposition~\ref{prop:rotation}.  Since
\(\norm{\overline{\mathcal A}}\le\norm{\mathcal A}\),
\eqref{eq:general-norm-defect} and
\eqref{eq:individual-kernel-lower} give
\[
 2\mathfrak p(\mathcal A)
 =\norm{\mathcal A}-1
 \ge\norm{\overline{\mathcal A}}-1
 \ge c_d\left(\frac{L}{N+1}\right)^2.
\]
This proves \eqref{eq:individual-operator-lower}.  Finally, taking the
infimum over all operators admissible in \eqref{eq:intro-lambda} gives
\eqref{eq:lower-lambda}.
\end{proof}

\section{The matching upper bound}
\label{sec:upper}

We prove the matching upper bound by constructing, for
\(1\le L\le\kappa N\) with \(0<\kappa<1\), a bandwidth-\(N\) operator
that reproduces \(\mathbb P_L\) exactly and satisfies the desired upper bound.
The construction starts from a positive operator \(\mathcal J_N\) with
range in \(\mathbb P_N\).  Although \(\mathcal J_N\) need not reproduce
nonconstant polynomials exactly, its error on \(\mathbb P_m\) is of order
\((m/(N+1))^2\).  A flat-top correction then restores exact reproduction
without changing this order.  We use a classical spherical Jackson operator
for \(\mathcal J_N\); see, e.g.,
\cite{DaiXu2013,LizorkinNikolskii1983}.

\begin{lemma}[Positive Jackson approximation]
\label{lem:positive-Jackson}
For every \(d\ge1\), there exist positive zonal operators
$\mathcal J_N:C(\Sd)\to\mathbb P_N$ for $N\ge1$
and a constant \(\gamma_d^{\mathrm J}>0\) such that
\begin{enumerate}
\item[(i)] \(\mathcal J_N1=1\), \(\mathcal J_Nf\ge0\) whenever \(f\ge0\), and
      \(\norm{\mathcal J_N}_{C(\Sd)\to C(\Sd)}=1\);
\item[(ii)] for every \(p\in\mathbb P_m\), \(0\le m\le N\),
\begin{equation}\label{eq:Jackson-poly-error}
 \norm{(\id-\mathcal J_N)p}_\infty
 \le \gamma_d^{\mathrm J}\left(\frac{m}{N+1}\right)^2\norm{p}_\infty.
\end{equation}
\end{enumerate}
\end{lemma}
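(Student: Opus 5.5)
We construct \(\mathcal J_N\) as a zonal operator \(\mathcal T_{K_N}\) whose kernel is a normalized power of a Fej\'er-type kernel, following the classical Jackson construction. Set \(n:=\lfloor N/4\rfloor\) (with \(\mathcal J_N:=\Proj_0\) adjusted trivially for small \(N\), where (ii) holds after enlarging \(\gamma_d^{\mathrm J}\) because \(m/(N+1)\ge 1/(N+1)\) and \(\norm{\Proj_0}=1\)). Define
\[
 k_N(\cos\theta):=\lambda_{N}\left(\frac{\sin((n+1)\theta/2)}{\sin(\theta/2)}\right)^{2s},
\]
where \(s\ge1\) is an integer depending only on \(d\) (for instance \(s=\lceil (d+3)/2\rceil\)) and \(\lambda_N>0\) normalizes \(\widehat k_N(0)=1\). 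By the Fej\'er identity used in Lemma~\ref{lem:fejer-peak}, \(k_N\) is an algebraic polynomial of degree \(sn\); we choose \(n\approx N/s\) so that the degree is at most \(N\). Since \(k_N\ge0\) and \(\widehat k_N(0)=1\), the representing measures are probability measures. Hence \(\mathcal J_N\) is positive, \(\mathcal J_N1=1\), and \(\norm{\mathcal J_N}=1\) by \eqref{eq:norm-negative-mass}. This establishes (i).

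For (ii), we first prove a second-moment estimate:
\[
 \int_{\Sd}\rho(x,y)^2K_N(x,y)\,\dd\sigma(y)\le C_d (N+1)^{-2}.
\]
The kernel satisfies \(k_N(\cos\theta)\le C\lambda_N\min\{(n+1)^{2s},\theta^{-2s}\}\), and \(\lambda_N\asymp (n+1)^{-2s+d}\). This follows by integrating against \(\sin^{d-1}\theta\,\dd\theta\), which gives total mass \(\asymp\lambda_N(n+1)^{2s-d}\), provided \(2s>d\). The second moment is then \(\lambda_N\int_0^\pi\theta^2\min\{(n+1)^{2s},\theta^{-2s}\}\theta^{d-1}\dd\theta\asymp (n+1)^{-2}\) when \(2s>d+2\). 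This is the routine but essential computation, and it is where the choice of \(s\) matters.

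Next, we convert the moment bound into the polynomial error estimate. For \(p\in\mathbb P_m\) and fixed \(x\), positivity and unit mass give
\[
 p(x)-\mathcal J_Np(x)=\int_{\Sd}\bigl(p(x)-p(y)\bigr)K_N(x,y)\,\dd\sigma(y).
\]
We Taylor expand \(p\) along the geodesic from \(x\) to \(y\): \(p(y)=p(x)+\rho\,\partial_u p(x)+O(\rho^2\sup|\partial_u^2p|)\), where \(u\) is the unit tangent direction. The first-order term integrates to zero by the zonal symmetry of \(K_N(x,\cdot)\): the tangent directions are symmetric under reflections fixing \(x\), and for \(d=1\) the kernel is even in \(\theta\). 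The remainder is controlled by Bernstein's inequality on the sphere, namely that second derivatives along great circles of \(p\in\mathbb P_m\) are bounded by \(m^2\norm{p}_\infty\). This holds because the restriction of \(p\) to a great circle is a trigonometric polynomial of degree \(m\), so the classical Bernstein inequality applies twice. Combining these gives \(|p(x)-\mathcal J_Np(x)|\le \tfrac12 m^2\norm{p}_\infty\int\rho^2K_N\,\dd\sigma\le \gamma_d^{\mathrm J}(m/(N+1))^2\norm{p}_\infty\), which is (ii).

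The main obstacle is the cancellation of the first-order term together with a clean second-order remainder. Working along each great circle through \(x\) avoids any intrinsic Hessian machinery. We write \(y=\cos\theta\, x+\sin\theta\, u\) with \(u\perp x\), and set \(g_u(\theta):=p(\cos\theta\, x+\sin\theta\, u)\), a trigonometric polynomial of degree at most \(m\). Then \(|g_u(\theta)-g_u(0)-\theta g_u'(0)|\le\tfrac12\theta^2m^2\norm{p}_\infty\). The linear term \(\theta g_u'(0)\) is odd under \(u\mapsto-u\), while the measure \(K_N(x,y)\,\dd\sigma(y)\) in the coordinates \((\theta,u)\) is invariant under this map. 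Hence the linear term integrates to zero and the estimate follows.
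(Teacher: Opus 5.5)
Your proposal is correct and follows essentially the same route as the paper. Both use a normalized $2s$-th power of the Fej\'er kernel with $2s>d+2$, prove the second-moment bound $\int\rho^2K_N\,\dd\sigma\lesssim_d(N+1)^{-2}$, and expand to second order along great circles, where Bernstein's inequality controls $g_u''$, the linear term cancels under $u\mapsto-u$, and $\Proj_0$ covers small $N$. The only slip is your initial choice $n=\lfloor N/4\rfloor$, which can give degree $sn>N$ once $s>4$ (i.e.\ $d\ge6$); take $n=\lfloor N/s\rfloor$, as you yourself indicate, which matches the paper's $k=\lfloor N/s\rfloor+1$.
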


\begin{proof}
For $s:=\left\lfloor{(d+2)}/{2}\right\rfloor+1$ and \(k\ge2\), we define the spherical Jackson kernel
\[
 J_{k,s}(\cos\theta)
 :=c_{k,s,d}
 \left(\frac{\sin(k\theta/2)}{\sin(\theta/2)}\right)^{2s},
 \qquad 0\le\theta\le\pi,
\]
where \(c_{k,s,d}>0\) is chosen such that the normalization $\int_{\Sd}J_{k,s}(x\cdot y)\,\dd\sigma(y)=1$ holds.
By rotational invariance, the normalizing constant is independent of
\(x\).  The Fej\'er identity
\[
 \left(\frac{\sin(k\theta/2)}{\sin(\theta/2)}\right)^2
 =k+2\sum_{j=1}^{k-1}(k-j)\cos(j\theta)
\]
shows that \(J_{k,s}(\cos\theta)\) is a nonnegative algebraic polynomial
in \(\cos\theta\) of degree at most \(s(k-1)\).  Hence the corresponding Jackson operator, a zonal operator,
\[
 \mathcal J_{k,s}f(x)
 :=\int_{\Sd}J_{k,s}(x\cdot y)f(y)\,\dd\sigma(y)
\]
has range in \(\mathbb P_{s(k-1)}\).  Moreover, $\mathcal J_{k,s}1=1$ and
$\mathcal J_{k,s}f\ge0$ whenever $f\ge0$.
Since
\[
 \abs{\mathcal J_{k,s}f(x)}
 \le \norm f_\infty
     \int_{\Sd}J_{k,s}(x\cdot y)\,\dd\sigma(y)
 =\norm f_\infty,
\]
we have \(\norm{\mathcal J_{k,s}}\le1\), while
\(\mathcal J_{k,s}1=1\) gives the reverse inequality.  Therefore $\norm{\mathcal J_{k,s}}_{C(\Sd)\to C(\Sd)}=1$.

We next establish the second-moment estimate
\begin{equation}\label{eq:Jackson-moment}
 \sup_{x\in\Sd}
 \int_{\Sd}\rho(x,y)^2J_{k,s}(x\cdot y)\,\dd\sigma(y)
 \lesssim_d k^{-2}.
\end{equation}
Let
\[ \Phi_{k,s}(\theta)
 := \left(\frac{\sin(k\theta/2)}{\sin(\theta/2)}\right)^{2s}.\]
Spherical polar coordinates give
\[
 \int_{\Sd}\rho(x,y)^2J_{k,s}(x\cdot y)\,\dd\sigma(y)
 =
 \frac{
 \displaystyle
 \int_0^\pi
 \theta^2\Phi_{k,s}(\theta)(\sin\theta)^{d-1}\,\dd\theta}
 {\displaystyle
 \int_0^\pi
 \Phi_{k,s}(\theta)(\sin\theta)^{d-1}\,\dd\theta}.
\]
If \(0\le\theta\le k^{-1}\), the elementary inequalities $\sin(k\theta/2)\ge{k\theta}/{\pi}$
and $\sin(\theta/2)\le{\theta}/{2}$ lead to $\Phi_{k,s}(\theta)\gtrsim_s k^{2s}$.
Since \(\sin\theta\asymp\theta\) on \([0,1]\),
\[
 \int_0^\pi
 \Phi_{k,s}(\theta)(\sin\theta)^{d-1}\,\dd\theta
 \gtrsim_{d,s}
 k^{2s}\int_0^{k^{-1}}\theta^{d-1}\,\dd\theta
 \asymp_{d,s}k^{2s-d}.
\]
If \(0<\theta\le\pi\), the elementary inequalities
$|\sin(k\theta/2)|\le k|\sin(\theta/2)|$,
$\sin(\theta/2)\ge{\theta}/{\pi}$,
and $|\sin(k\theta/2)|\le1$
give $\Phi_{k,s}(\theta)
 \lesssim_s\min\{k^{2s},\theta^{-2s}\}$.
Splitting the integral over
\([0,k^{-1}]\), \([k^{-1},1]\), and \([1,\pi]\), we obtain
\[
 \int_0^\pi
 \theta^2\Phi_{k,s}(\theta)(\sin\theta)^{d-1}\,\dd\theta
 \lesssim_{d,s}
 k^{2s}\int_0^{k^{-1}}\theta^{d+1}\,\dd\theta +\int_{k^{-1}}^1\theta^{d+1-2s}\,\dd\theta+1\lesssim_{d,s}k^{2s-d-2},
\]
where \(2s>d+2\) is used in the last step.  Taking the ratio proves
\eqref{eq:Jackson-moment}.

We now apply \eqref{eq:Jackson-moment} to \(p\in\mathbb P_m\).
Fix \(x\in\Sd\), and let \(\mathbb S_x^{d-1}\) be the unit sphere in
the tangent space \(T_x\Sd\), equipped with normalized surface measure
\(\sigma_{d-1,x}\).  For \(\xi\in\mathbb S_x^{d-1}\), let $g_\xi(t):=p(\cos t\,x+\sin t\,\xi)$,
 a trigonometric polynomial of degree at most
\(m\).  The Bernstein inequality therefore gives
\[
 \norm{g_\xi''}_\infty
 \le m^2\norm{g_\xi}_\infty
 \le m^2\norm p_\infty.
\]
Taylor's formula gives 
\[g_\xi(\theta)-g_\xi(0)
 =\theta g_\xi'(0)+r_\xi(\theta),\] where
 $\abs{r_\xi(\theta)}
 \le\frac12m^2\theta^2\norm p_\infty$.
By the chain rule, $g_\xi'(0)=\nabla_{\Sd}p(x)\cdot\xi$, which implies
\(g_{-\xi}'(0)=-g_\xi'(0)\).  By the symmetry of
\(\sigma_{d-1,x}\),
\[
 \int_{\mathbb S_x^{d-1}}g_\xi'(0)\,
 \dd\sigma_{d-1,x}(\xi)=0.
\]
It follows that
\begin{equation}\label{eq:Jackson-direction-average}
 \left|
 \int_{\mathbb S_x^{d-1}}
 \bigl(g_\xi(\theta)-g_\xi(0)\bigr)
 \,\dd\sigma_{d-1,x}(\xi)
 \right|
 \le\frac12m^2\theta^2\norm p_\infty.
\end{equation}
Since \(J_{k,s}\) is normalized,
\[
 \mathcal J_{k,s}p(x)-p(x)
 =
 \int_{\Sd}J_{k,s}(x\cdot y)
 \bigl(p(y)-p(x)\bigr)\,\dd\sigma(y).
\]
For $y=\cos\theta\,x+\sin\theta\,\xi$, where  $\xi\in\mathbb S_x^{d-1}$,
we have $p(y)-p(x)=g_\xi(\theta)-g_\xi(0)$ and 
$\rho(x,y)=\theta$.
Since \(J_{k,s}(x\cdot y)=J_{k,s}(\cos\theta)\) is independent of
\(\xi\), spherical polar coordinates give
\[
 \mathcal J_{k,s}p(x)-p(x)
 =
 \alpha_d\int_0^\pi J_{k,s}(\cos\theta)
 \left[
 \int_{\mathbb S_x^{d-1}}
 \bigl(g_\xi(\theta)-g_\xi(0)\bigr)
 \,\dd\sigma_{d-1,x}(\xi)
 \right] 
 (\sin\theta)^{d-1}\,\dd\theta,
\]
where
\(
 \alpha_d:=\Gamma((d+1)/2)/(\sqrt{\pi}\,\Gamma(d/2))
\)
is the normalization constant in the spherical polar-coordinate formula.
Using \eqref{eq:Jackson-direction-average} and the nonnegativity of
\(J_{k,s}\), we obtain
\[
 \abs{\mathcal J_{k,s}p(x)-p(x)}
 \le
 \frac{m^2\norm p_\infty}{2}
 \int_{\Sd}
 \rho(x,y)^2J_{k,s}(x\cdot y)\,\dd\sigma(y).
\]
By the second-moment estimate \eqref{eq:Jackson-moment},
\[
 \abs{\mathcal J_{k,s}p(x)-p(x)}
 \lesssim_d
 \left(\frac{m}{k}\right)^2\norm p_\infty.
\]
Taking the supremum over \(x\in\Sd\) yields
\begin{equation}\label{eq:keyestimate}
 \norm{(\id-\mathcal J_{k,s})p}_\infty
 \le C_d\left(\frac{m}{k}\right)^2\norm p_\infty.
\end{equation}
To show \eqref{eq:Jackson-poly-error}, we first consider \(N\ge2s\), and choose
 $k:=\left\lfloor N/s\right\rfloor+1$.
Then $s(k-1)\le N$ and $k\asymp_d N+1$.
Hence
\(\mathcal J_N:=\mathcal J_{k,s}\) has range in \(\mathbb P_N\), and
\eqref{eq:keyestimate} gives \eqref{eq:Jackson-poly-error}. We then consider the remaining case
of  \(1\le N<2s\).  Let
\(\mathcal J_N:=\Proj_0\), the positive projection onto the constants.
If \(m=0\), then
\((\id-\Proj_0)p=0\).  If \(1\le m\le N<2s\), then
\[
 \left(\frac{m}{N+1}\right)^2
 \ge \left(\frac{1}{N+1}\right)^2
 \ge\frac{1}{(2s)^2},
\]
while $\norm{(\id-\Proj_0)p}_\infty
 \le2\norm p_\infty$.
Increasing \(\gamma_d^{\mathrm J}\), if necessary, covers these finitely
many values of \(N\) and completes the proof.
\end{proof}

We also use a smooth flat-top filtered approximation operator.
\begin{lemma}[Filtered approximation]
\label{lem:flat-top}
Fix \(\beta>1\). There exists a real \(C^\infty\) filter
\(h_\beta:[0,\infty)\to[0,1]\) satisfying
\begin{equation*}
h_\beta(t) = \begin{cases}
1, & 0\le t\le1,\\
0, & t\ge \beta.
\end{cases}
\end{equation*}
The zonal filtered approximation operators
\begin{equation}\label{eq:V-L}
 \mathcal V_Lf
 :=\sum_{\ell\ge0}h_\beta\left(\frac{\ell}{L}\right)\Proj_\ell f
\end{equation}
reproduce \(\mathbb P_L\), have range in
\(\mathbb P_{\lceil\beta L\rceil-1}\), and satisfy
\begin{equation}\label{eq:flat-top-properties}
 \sup_{L\ge1}
 \norm{\mathcal V_L}_{C(\Sd)\to C(\Sd)}
 \le B_{d,\beta}.
\end{equation}
\end{lemma}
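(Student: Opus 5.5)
We first construct \(h_\beta\), then read off reproduction and range directly from the filter's support, and finally prove the uniform bound \eqref{eq:flat-top-properties} by a localization estimate for the filtered kernel.

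\emph{Construction of the filter.} Take a standard \(C^\infty\) step function \(\psi:\mathbb R\to[0,1]\) with \(\psi=0\) on \((-\infty,0]\) and \(\psi=1\) on \([1,\infty)\), for instance \(\psi(t)=\phi(t)/(\phi(t)+\phi(1-t))\) with \(\phi(t)=e^{-1/t}\) for \(t>0\). Set
\[
 h_\beta(t):=1-\psi\bigl((t-1)/(\beta-1)\bigr).
\]
This function is smooth, takes values in \([0,1]\), equals \(1\) on \([0,1]\) and equals \(0\) on \([\beta,\infty)\).

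\emph{Reproduction and range.} For \(\ell\le L\) we have \(h_\beta(\ell/L)=1\), so \(\mathcal V_L\) acts as the identity on \(\mathbb P_L\). If \(h_\beta(\ell/L)\neq0\), then \(\ell<\beta L\), i.e. \(\ell\le\lceil\beta L\rceil-1\). Hence the sum in \eqref{eq:V-L} is finite and the range lies in \(\mathbb P_{\lceil\beta L\rceil-1}\).

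\emph{Uniform bound.} The operator is zonal with kernel
\[
 V_L(x\cdot y)=\sum_\ell h_\beta(\ell/L)\,Z(d,\ell)P_\ell^{(d)}(x\cdot y).
\]
By \eqref{eq:norm-negative-mass}, \(\norm{\mathcal V_L}=\int_{\Sd}\abs{V_L(x\cdot y)}\dd\sigma(y)\). It therefore suffices to establish the localization estimate
\[
 \abs{V_L(\cos\theta)}\le C_{d,\beta,M}\,\frac{L^d}{(1+L\theta)^M}
\]
for some \(M>d\). Integrating this bound in polar coordinates, with density \((\sin\theta)^{d-1}\lesssim\theta^{d-1}\), gives
\[
 \int_0^\pi \frac{L^d\theta^{d-1}}{(1+L\theta)^M}\,\dd\theta\lesssim\int_0^\infty \frac{u^{d-1}}{(1+u)^M}\,\dd u<\infty,
\]
uniformly in \(L\). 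The trivial part is the near-diagonal range \(\theta\le 1/L\): there \(\abs{P_\ell^{(d)}}\le1\) and \(\sum_{\ell<\beta L}Z(d,\ell)\lesssim L^d\) by \eqref{eq:dim-Pi}.

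\emph{Main obstacle: the off-diagonal decay.} The key step is proving the decay for \(\theta\ge1/L\). This is the classical localization of smooth filtered kernels, due to Narcowich--Petrushev--Ward and also in Dai--Xu \cite{DaiXu2013}. I would cite it, and sketch the argument as follows.
\begin{enumerate}[label=(\roman*)]
\item Write \(Z(d,\ell)P^{(d)}_\ell\) in terms of Jacobi polynomials \(P^{(a,b)}_\ell\) with \(a=b=(d-2)/2\); note that \(Z(d,\ell)\asymp\ell^{d-1}\).
\item Apply summation by parts \(M\) times with finite differences in \(\ell\). Each difference of \(h_\beta(\ell/L)\) gains a factor \(L^{-1}\) by the smoothness of \(h_\beta\).
\item Use the Christoffel--Darboux-type identities, which convert the sums of shifted Jacobi polynomials arising in step (ii) into a factor \((1-\cos\theta)^{-1}\approx\theta^{-2}\) per pair of differences.
\item Combine with the standard pointwise bound \(\abs{P^{(a,b)}_\ell(\cos\theta)}\lesssim(\ell\theta)^{-(d-1)/2}\), suitably normalized.
\end{enumerate}

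For \(d=1\) all of this is elementary. The kernel is \(\sum_\ell h_\beta(\abs{\ell}/L)e^{i\ell\theta}\), and repeated summation by parts against \(\sum_\ell e^{i\ell\theta}\) gives \(L\,(1+L\theta)^{-M}\) decay directly. The only subtlety is that \(h_\beta(\abs{t})\) is smooth at \(t=0\), which holds because \(h_\beta\) is constant near \(0\).

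Carrying the exponent bookkeeping through steps (ii)--(iv) for general \(d\) is the technical heart of the proof. In a paper of this type I would cite the result rather than reprove it.
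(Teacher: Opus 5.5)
Your proposal is correct and takes essentially the paper's route: reproduction and range are read off from the support of $h_\beta$, and the uniform bound is taken from the standard theory of smooth filtered kernels. The paper cites the Sloan--Womersley/Wang--Sloan $L^1$-boundedness theorem directly, whereas you cite the stronger pointwise localization estimate $\abs{V_L(\cos\theta)}\lesssim L^d(1+L\theta)^{-M}$ and integrate it, which is legitimate for a $C^\infty$ filter that is constant near $0$; the one imprecision, the Jacobi bound in step (iv) failing near $\theta=\pi$ as written, lies in the cited part and does not affect the argument.
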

\begin{proof}
The reproduction and range properties follow directly from the support
conditions on \(h_\beta\). 
The uniform bound \eqref{eq:flat-top-properties}
is the standard boundedness theorem for smooth filtered approximation
operators; see \cite{SloanWomersley2012,WangSloan2017}.
\end{proof}

We now combine the positive Jackson operator $\mathcal{J}_N$ with the filtered approximation operator $\mathcal{V}_L$
to obtain an exactly reproducing operator with the matching excess norm.

\begin{theorem}[Upper bound]
\label{thm:upper}
Fix \(0<\kappa<1\).  There exists \(C_{d,\kappa}>0\) such that, for every
\(1\le L\le\kappa N\), the zonal operator
\begin{equation}\label{eq:T-construction}
 \mathcal T_{L,N}
 :=\mathcal J_N+(\id-\mathcal J_N)\mathcal V_L
\end{equation}
belongs to \(\mathfrak T_d(L,N)\) and satisfies
\begin{equation}\label{eq:individual-kernel-upper}
 \mathfrak p(\mathcal T_{L,N})
 \le \frac{C_{d,\kappa}}{2}\left(\frac{L}{N+1}\right)^2.
\end{equation}
Consequently,
\begin{equation}\label{eq:upper-lambda}
 \lambda_d(L,N)-1
 \le C_{d,\kappa}\left(\frac{L}{N+1}\right)^2.
\end{equation}
\end{theorem}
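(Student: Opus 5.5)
We first check that \(\mathcal T_{L,N}\) is admissible, and then bound its norm directly. By Proposition~\ref{prop:general-defect}, the norm controls the positivity loss.

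\emph{Admissibility.} Both \(\mathcal J_N\) and \(\mathcal V_L\) are zonal, i.e.\ diagonal in the harmonic decomposition. Hence \(\mathcal T_{L,N}\) is the zonal operator with multipliers
\[
 j_\ell+(1-j_\ell)h_\beta(\ell/L),
\]
where \(j_\ell\) denotes the multipliers of \(\mathcal J_N\). We choose \(\beta:=(1+\kappa)/(2\kappa)>1\), so that \(\lceil\beta L\rceil-1\le\beta\kappa N\le N\) (up to handling the ceiling, say by taking \(\beta\) slightly smaller than \(1/\kappa\)). Then \(\ran\mathcal V_L\subseteq\mathbb P_N\) and \(\ran\mathcal J_N\subseteq\mathbb P_N\), so \(\ran\mathcal T_{L,N}\subseteq\mathbb P_N\). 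For \(p\in\mathbb P_L\) we have \(\mathcal V_Lp=p\), and therefore
\[
 \mathcal T_{L,N}p=\mathcal J_Np+p-\mathcal J_Np=p.
\]
Thus \(\mathcal T_{L,N}\in\mathfrak T_d(L,N)\); its kernel is a finite combination of the \(\mathcal Z_\ell\) with \(\ell\le N\).

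\emph{Norm bound.} For \(f\in C(\Sd)\) with \(\norm f_\infty\le1\), the function \(p:=\mathcal V_Lf\) lies in \(\mathbb P_m\) with \(m=\lceil\beta L\rceil-1\le\beta L\), and \(\norm p_\infty\le B_{d,\beta}\) by Lemma~\ref{lem:flat-top}. Lemma~\ref{lem:positive-Jackson}(ii) then gives
\[
 \norm{(\id-\mathcal J_N)\mathcal V_Lf}_\infty\le\gamma_d^{\mathrm J}\beta^2B_{d,\beta}\left(\frac{L}{N+1}\right)^2 .
\]
Combining this with \(\norm{\mathcal J_N}=1\) from Lemma~\ref{lem:positive-Jackson}(i), the triangle inequality yields
\[
 \norm{\mathcal T_{L,N}}\le1+\gamma_d^{\mathrm J}\beta^2B_{d,\beta}\left(\frac{L}{N+1}\right)^2 .
\]
Since \(\mathcal T_{L,N}1=1\), Proposition~\ref{prop:general-defect} converts this into
\[
 \mathfrak p(\mathcal T_{L,N})=\tfrac12\bigl(\norm{\mathcal T_{L,N}}-1\bigr),
\]
which gives \eqref{eq:individual-kernel-upper} with \(C_{d,\kappa}:=\gamma_d^{\mathrm J}\beta^2B_{d,\beta}\). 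Finally, \eqref{eq:upper-lambda} follows from the definition of \(\lambda_d\), or equivalently from Proposition~\ref{prop:rotation}.

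The only delicate point is choosing \(\beta\) depending on \(\kappa\) so that the range condition \(\lceil\beta L\rceil-1\le N\) holds for all \(L\le\kappa N\), while \(B_{d,\beta}\) stays a constant depending only on \(d\) and \(\kappa\). Any fixed \(\beta\in(1,1/\kappa]\) works, because \(\lceil\beta L\rceil-1<\beta L\le N\). Everything else is a direct combination of the two lemmas.
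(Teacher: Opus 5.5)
Your proof is correct and is essentially the paper's argument. The paper likewise fixes \(\beta\in(1,\kappa^{-1})\) and uses \(\lceil\beta L\rceil-1\le\beta L\le\beta\kappa N<N\) for admissibility. It then bounds \(\norm{(\id-\mathcal J_N)\mathcal V_L}\) by the Jackson error on \(\mathbb P_{m_L}\) times the uniform filter bound \(B_{d,\beta}\), and converts the resulting norm estimate into positivity loss via \(\norm{\mathcal T_{L,N}}=1+2\mathfrak p(\mathcal T_{L,N})\). Your parenthetical worry about the ceiling is unnecessary, as your own closing observation \(\lceil\beta L\rceil-1<\beta L\) already shows.
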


\begin{proof}
Choose \(1<\beta<\kappa^{-1}\), set
\(m_L:=\lceil\beta L\rceil-1\), and take \(\mathcal V_L\) and
\(\mathcal J_N\) from Lemmas~\ref{lem:flat-top} and~
\ref{lem:positive-Jackson}, respectively.  Since \(L\le\kappa N\),
\[
 m_L\le\beta L\le\beta\kappa N<N,
 \qquad
 \ran\mathcal V_L\subseteq\mathbb P_{m_L}\subseteq\mathbb P_N.
\]
The range, zonality, and reproduction properties in these two lemmas,
together with \eqref{eq:T-construction}, show that
\(\mathcal T_{L,N}\in\mathfrak T_d(L,N)\).

For \(f\in C(\Sd)\), the polynomial \(\mathcal V_Lf\) belongs to
\(\mathbb P_{m_L}\).  Applying \eqref{eq:Jackson-poly-error} to this
polynomial and then using \eqref{eq:flat-top-properties}, we obtain
\[
 \norm{(\id-\mathcal J_N)\mathcal V_Lf}_\infty
 \le \gamma_d^{\mathrm J}\left(\frac{m_L}{N+1}\right)^2
       \norm{\mathcal V_Lf}_\infty
 \le C_{d,\kappa}\left(\frac{L}{N+1}\right)^2\norm f_\infty.
\]
Since \(\norm{\mathcal J_N}=1\), it follows from
\eqref{eq:T-construction} and \eqref{eq:norm-negative-mass} that
\[
 2\mathfrak p(\mathcal T_{L,N})
 =\norm{\mathcal T_{L,N}}-1
 \le\norm{(\id-\mathcal J_N)\mathcal V_L}
 \le C_{d,\kappa}\left(\frac{L}{N+1}\right)^2.
\]
This proves \eqref{eq:individual-kernel-upper}.  Finally,
\(\lambda_d(L,N)\le\norm{\mathcal T_{L,N}}\), and hence
\eqref{eq:upper-lambda} follows.
\end{proof}

Combining Theorems~\ref{thm:lower} and~\ref{thm:upper}
proves Theorem~\ref{thm:intro}.

\section{Applications and discussion}
\label{sec:consequences}

Theorem~\ref{thm:intro} gives a quantitative form of the
impossibility triangle formed by positivity, exact low-frequency reproduction,
and finite spectral bandwidth.  Any two of these properties are compatible,
but all three cannot hold simultaneously.  This trade-off already appears
in classical Fourier summability. For example, Fej\'er means preserve positivity and
finite bandwidth at the expense of exact reproduction, while delayed
de la Vall\'ee--Poussin means restore exact low-frequency reproduction
through a transition band and are no longer positive in general.  The
operators \(\mathcal J_N\) and \(\mathcal V_L\) used above play analogous
roles on the sphere.  We now discuss three consequences of the quadratic
obstruction, beginning with its relation to classical Fourier summability
on the circle.

\subsection{Generalized projections and de la Vall\'ee--Poussin means}

Let \(\mathbb T:=\mathbb R/(2\pi\mathbb Z)\), and identify \(C(\mathbb T)\)
with the space of real-valued continuous \(2\pi\)-periodic functions. For
\(n\ge0\), let
\[
 \mathbb P_n(\mathbb T)
 :=
 \left\{
 a_0+\sum_{k=1}^{n}
 \bigl(a_k\cos(k\theta)+b_k\sin(k\theta)\bigr):
 a_k,b_k\in\mathbb R
 \right\}
\]
be the space of real trigonometric polynomials of degree at most \(n\).
Under the parametrization
\(\theta\mapsto(\cos\theta,\sin\theta)\),
the space \(\mathbb P_n(\mathbb T)\) is naturally identified with
\(\mathbb P_n(\mathbb S^1)\). Theorem~\ref{thm:intro} therefore gives
the following two-scale estimate.

\begin{corollary}\label{cor:circle}
For all integers \(s\ge3\) and \(L\ge1\),
\begin{equation}\label{eq:circle}
 \lambda_1(L,sL-1)-1
 \asymp s^{-2},
\end{equation}
uniformly in \(s\) and \(L\).
\end{corollary}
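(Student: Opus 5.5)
The plan is to specialize Theorem~\ref{thm:intro} to \(d=1\), \(N=sL-1\), after checking that the circle problem coincides with the sphere problem for \(\mathbb S^1\).

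Under the parametrization \(\theta\mapsto(\cos\theta,\sin\theta)\), the map \(f\mapsto f\circ(\cos,\sin)\) is an isometric isomorphism \(C(\mathbb S^1)\to C(\mathbb T)\). It carries \(\mathbb P_n(\mathbb S^1)\) onto \(\mathbb P_n(\mathbb T)\), since both have dimension \(2n+1\) and are spanned by \(1\), \(\cos k\theta\), \(\sin k\theta\) for \(k\le n\). Hence the admissible classes, and therefore \(\lambda_1(L,N)\), are the same in both settings.

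Next, verify the hypotheses of Theorem~\ref{thm:intro} with a fixed \(\kappa\) independent of \(s\) and \(L\). Take \(N=sL-1\) with \(s\ge3\) and \(L\ge1\). Then
\[
\frac{L}{N}=\frac{L}{sL-1}\le\frac{L}{3L-1}\le\frac12,
\]
because \(3L-1\ge2L\) for \(L\ge1\). So \(\kappa=1/2\) works, and \(1\le L\le\kappa N\) holds. Theorem~\ref{thm:intro} then gives
\[
c_1\Bigl(\frac{L}{N+1}\Bigr)^2\le\lambda_1(L,sL-1)-1\le C_{1,1/2}\Bigl(\frac{L}{N+1}\Bigr)^2 .
\]

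Finally, since \(N+1=sL\), we have \(L/(N+1)=1/s\) exactly. This yields \(c_1s^{-2}\le\lambda_1(L,sL-1)-1\le C_{1,1/2}\,s^{-2}\), with constants independent of \(s\) and \(L\).

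There is no real obstacle. The only points needing care are the uniform choice of \(\kappa\) and the exact cancellation \(L/(N+1)=1/s\). The restriction \(s\ge3\) is what makes \(\kappa=1/2\) work uniformly in \(L\). For \(s=2\) and \(L=1\) one would have \(N=1=L\), which violates \(L<N\).
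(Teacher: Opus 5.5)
Your proposal is correct and matches the paper's proof: apply Theorem~\ref{thm:intro} with \(d=1\), \(N=sL-1\), \(\kappa=1/2\). You check \(L\le N/2\) from \(s\ge3\), and \(L/(N+1)=1/s\) gives the result. Your explicit check that \(\mathbb P_n(\mathbb S^1)\) and \(\mathbb P_n(\mathbb T)\) correspond under the isometry is a detail the paper states in the text preceding the corollary rather than in the proof itself.
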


\begin{proof}
Let \(N=sL-1\).  Since \(s\ge3\) and \(L\ge1\), we have
\[
 L\le\frac12(sL-1)=\frac N2.
\]
Together with ${L}/{(N+1)}=1/s$, the result follows from Theorem~\ref{thm:intro} with
\(d=1\) and \(\kappa=1/2\).
\end{proof}

On the circle, the impossibility triangle is already visible in classical Fourier
summability. Fej\'er means are positive bandlimited contractions, but they
reproduce only the constants exactly. Delayed de la Vall\'ee--Poussin means
use a transition band to recover exact reproduction of a prescribed
low-frequency space, at the cost of positivity in general; see, e.g.,
\cite{deLaValleePoussin1919,Zygmund1959}. We compare \eqref{eq:circle} with the classical delayed
de la Vall\'ee--Poussin means. Let 
\[\Proj_{\le m}f(\theta)
 :=\sum_{|k|\le m}\widehat f(k)e^{ik\theta}\]
be the Fourier
projection onto \(\mathbb P_m(\mathbb T)\), where
$\widehat f(k)
 :=\frac1{2\pi}\int_0^{2\pi}
 f(\theta)e^{-ik\theta}\,\dd\theta$.
For integers \(s\ge2\), we define the de la Vall\'ee--Poussin operator by
\[
 \mathcal H_{L,sL}f
 :=\frac{1}{(s-1)L}
   \sum_{m=L}^{sL-1}\Proj_{\le m}f,
\]
which maps \(C(\mathbb T)\) into
\(\mathbb P_{sL-1}(\mathbb T)\) and reproduces
\(\mathbb P_L(\mathbb T)\).  Its Fourier
multiplier equals one for \(|k|\le L\), decreases linearly to zero for
\(L<|k|<sL\), and vanishes for \(|k|\ge sL\).  Thus
\(\mathcal H_{L,sL}\) is one particular operator in the
admissible class defining
\(
 \lambda_1(L,sL-1)
\).

A natural comparison is provided by the specific de la Vall\'ee--Poussin
operators studied in
\cite{DeregowskaFoucartLewandowskaSkrzypek2018}.  For the 
operator \(\mathcal H_{L,sL}:C(\mathbb T)\to\mathbb P_{sL-1}\), they proved
\[
 \norm{\mathcal H_{L,sL}}-1\asymp s^{-1}.
\]
They also showed that, when \(s>2\), \(\mathcal H_{L,sL}\) is not minimal
among all bounded linear operators $\mathcal A:C(\mathbb T)\to\mathbb P_{sL-1}$
satisfying $\mathcal A|_{\mathbb P_L}=\id$
in the sense that its operator norm is strictly larger than the corresponding
generalized-projection constant.  Corollary~\ref{cor:circle} complements
this result by determining the latter to sharp order:
\[
 \lambda_1(L,sL-1)-1\asymp s^{-2}.
\]
Thus the de la Vall\'ee--Poussin operators are not only nonminimal
for \(s>2\); their excess norm is of larger asymptotic order than the
smallest possible one.

\subsection{Filtered hyperinterpolation}
\label{sec:hyper}

Hyperinterpolation discretizes the \(L^2\)-orthogonal projection by
cubature; see \cite{AnRanWu2025survey,Sloan1995}.  Its uniform operator
norm, like that of the underlying projection, grows with the polynomial
degree.  Filtered hyperinterpolation replaces the sharp spectral cutoff by
a smooth filter, retaining exact low-degree reproduction while achieving
better localization and uniform stability; see, e.g.,
\cite{Sloan2011,SloanWomersley2012,WangSloan2017}.

We apply Theorem~\ref{thm:intro} to the filtered hyperinterpolation
operators of Sloan and Womersley \cite{SloanWomersley2012}.  Let
\(h:[0,\infty)\to\mathbb R\) satisfy the regularity assumptions of
\cite{SloanWomersley2012}, with \(h(t)=1\) for \(0\le t\le1\) and
\(h(t)=0\) for \(t\ge a\), where \(a>1\), and let
\[
 N_L:=\lceil aL\rceil-1.
\]
We define the associated filtered kernel
\[
 K_L(x,y)
 :=
 \sum_{\ell=0}^{N_L}
 h\left(\frac{\ell}{L}\right)\mathcal Z_\ell(x,y).
\]
Given a positive cubature rule
\(\{(x_j,w_j)\}_{j=1}^Q\), the filtered
hyperinterpolation operator is defined by
\begin{equation}\label{eq:SW-hyper}
 \mathcal F_Lf(x)
 :=
 \sum_{j=1}^Q
 w_j K_L(x,x_j)f(x_j).
\end{equation}
If the cubature rule is exact of degree at least \(L+N_L\), then
\(\mathcal F_L\) reproduces \(\mathbb P_L\). 

The uniform boundedness theorem of Sloan and Womersley
\cite{SloanWomersley2012} states that, for each fixed such filter \(h\),
the operator norms are uniformly bounded in \(L\):
\[
 \sup_{L\ge1}
 \norm{\mathcal F_L}_{C(\Sd)\to C(\Sd)}
 \le C_{d,h}.
\]
Theorem~\ref{thm:intro}
gives a complementary lower bound that quantifies their separation from
the positivity threshold \(1\).

\begin{corollary}\label{cor:SW-lower}
Suppose that, for each \(L\ge1\), the cubature rule used in
\(\mathcal F_L\) is exact of degree at least \(L+N_L\).
For every \(L\ge1\),
\begin{equation}\label{eq:SW-lower}
 \norm{\mathcal F_L}_{C(\Sd)\to C(\Sd)}
 \ge
 1+c_d
 \left(\frac{L}{\lceil aL\rceil}\right)^2.
\end{equation}
Consequently, for every fixed \(a>1\),
\begin{equation}\label{eq:SW-liminf}
 \liminf_{L\to\infty}
 \norm{\mathcal F_L}_{C(\Sd)\to C(\Sd)}
 \ge
 1+c_da^{-2}.
\end{equation}
\end{corollary}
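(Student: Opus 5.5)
The proof will show that $\mathcal F_L$ is admissible in \eqref{eq:intro-lambda} with $N=N_L$, and then apply the lower bound of Theorem~\ref{thm:lower}.

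First, $\mathcal F_L$ is a bounded operator $C(\Sd)\to C(\Sd)$. By \eqref{eq:SW-hyper}, $\mathcal F_Lf(x)=\sum_j w_jK_L(x,x_j)f(x_j)$. Each $K_L(\cdot,x_j)$ lies in $\mathbb P_{N_L}$, so $\ran\mathcal F_L\subseteq\mathbb P_{N_L}$. For $p\in\mathbb P_L$, the product $y\mapsto K_L(x,y)p(y)$ has degree at most $N_L+L$. Exactness of the cubature therefore gives
\[
\mathcal F_Lp(x)=\int_{\Sd}K_L(x,y)p(y)\,\dd\sigma(y)=\sum_{\ell=0}^{L}h(\ell/L)\Proj_\ell p(x)=p(x),
\]
because $h=1$ on $[0,1]$. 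So $\mathcal F_L|_{\mathbb P_L}=\id$, and $\mathcal F_L$ is admissible for $\lambda_d(L,N_L)$.

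Next, we need $1\le L\le N_L$. Since $a>1$, we have $aL>L$, so $\lceil aL\rceil\ge L+1$ and $N_L\ge L$. Theorem~\ref{thm:lower} requires only $1\le L\le N$, with no condition involving $\kappa$. Combining it with \eqref{eq:general-norm-defect} (note $\mathcal F_L1=1$ because $1\in\mathbb P_L$) gives
\[
\norm{\mathcal F_L}=1+2\mathfrak p(\mathcal F_L)\ge 1+c_d\left(\frac{L}{N_L+1}\right)^2=1+c_d\left(\frac{L}{\lceil aL\rceil}\right)^2,
\]
which is \eqref{eq:SW-lower}.

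For \eqref{eq:SW-liminf}, we use $aL\le\lceil aL\rceil<aL+1$, so $L/\lceil aL\rceil\to a^{-1}$ as $L\to\infty$, and take the liminf. The only delicate point is checking that the cubature exactness degree suffices for reproduction, and this is exactly the hypothesis $L+N_L$. No real obstacle is expected.
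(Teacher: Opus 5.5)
Your proposal is correct and follows the paper's proof: you show that $\mathcal F_L$ has range in $\mathbb P_{N_L}$ and reproduces $\mathbb P_L$, then apply Theorem~\ref{thm:lower} with $N=N_L$, using $N_L+1=\lceil aL\rceil$, and pass to the limit. You also spell out two points the paper leaves implicit, namely that cubature exactness of degree $L+N_L$ yields reproduction and that $a>1$ forces $N_L\ge L$.
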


\begin{proof}
The operator \(\mathcal F_L\) has range in
\(\mathbb P_{N_L}\) and reproduces \(\mathbb P_L\).  Applying the lower
bound in Theorem~\ref{thm:lower} with \(N=N_L\) gives
\[
 \norm{\mathcal F_L}_{C(\Sd)\to C(\Sd)}-1
 \ge
 c_d\left(\frac{L}{N_L+1}\right)^2
 =
 c_d\left(\frac{L}{\lceil aL\rceil}\right)^2.
\]
Letting \(L\to\infty\) proves \eqref{eq:SW-liminf}.
\end{proof}

Corollary~\ref{cor:SW-lower} complements this classical upper bound by
showing that the norm must remain quantitatively separated from \(1\).
The lower bound depends only on the ratio between the reproduced and output
bandwidths, and not on the particular transition profile or the admissible
cubature rule.

\subsection{Maximum principles}
\label{sec:effective-maximum-principles}
Let \(\mathcal A:C(\Sd)\to\mathbb P_N\) reproduce \(\mathbb P_L\).
Since \(1\in\mathbb P_L\), the operator is unital, and
Proposition~\ref{prop:general-defect} identifies
\(\mathfrak p(\mathcal A)\) with both the worst undershoot below \(0\)
and the worst overshoot above \(1\) over inputs \(0\le f\le1\).
For fixed \(0<\kappa<1\) and \(1\le L\le\kappa N\),
the estimate \eqref{eq:intro-main-positivity-loss} in
Theorem~\ref{thm:intro} therefore implies that, for every admissible \(\mathcal A\) and every
\(\varepsilon>0\), there exist \(0\le f_-,f_+\le1\) and
\(x_-,x_+\in\Sd\) such that
\[
 \mathcal Af_-(x_-)
 <
 -\frac{c_d}{2}\left(\frac{L}{N+1}\right)^2+\varepsilon,
 \qquad
 \mathcal Af_+(x_+)
 >
 1+\frac{c_d}{2}\left(\frac{L}{N+1}\right)^2-\varepsilon.
\]
Thus exact low-frequency reproduction and finite bandwidth rule out an maximum principle on \([0,1]\), and the smallest possible
worst-case violation is of quadratic order.

This provides a static operator-level counterpart to effective
maximum principles for spectral discretizations of evolution
equations; see \cite{Li2021Effective} for tori and
\cite[Chapter~5]{Wu2023Thesis} for spheres.  Such results typically fix a
spatial spectral discretization, together with the underlying evolution
equation and often a time-stepping scheme, and control the numerical
solution in an enlarged invariant interval.  Here the quadratic obstruction
arises already from finite spectral bandwidth and exact reproduction,
before effects from time discretization, aliasing, or nonlinear evolution
are introduced.

{\small
\bibliographystyle{siamplain}  
\bibliography{myref}
}

\end{document}